\journal{Linear Algebra and its Applications}
\theoremstyle{plain}
\newtheorem{theorem}{Theorem}[section]
\newtheorem{corollary}[theorem]{Corollary}
\newtheorem{proposition}[theorem]{Proposition}
\newtheorem{lemma}[theorem]{Lemma}
\theoremstyle{definition}
\newtheorem{remark}[theorem]{Remark}
\newtheorem{example}[theorem]{Example}
\DeclareMathOperator{\bs}{\mathbf{s}}
\DeclareMathOperator{\Tr}{Tr}
\DeclareMathOperator{\one}{\mathbf{1}}
\let\oldpi\pi
\renewcommand{\pi}{\ensuremath{\boldsymbol{\oldpi}}}
\let\oldalpha\alpha
\renewcommand{\alpha}{\ensuremath{\boldsymbol{\oldalpha}}}
\newcommand{\hutchpp}{\text{Hutch\raisebox{0.35ex}{\relscale{0.75}++}}}
\begin{document}

\begin{frontmatter}
\title{On Kemeny's constant and stochastic complement\tnoteref{titlelabel}}
\tnotetext[titlelabel]{This article was partially funded by the ``INdAM – GNCS Project: Metodi basati su matrici e tensori strutturati per problemi di algebra lineare di grandi dimensioni'' code CUP\_E53C22001930001, and by the PRIN ``Low-rank Structures and Numerical Methods in Matrix and Tensor Computations and their Application'' code 20227PCCKZ. S. Kim is supported in part by funding from the Fields Institute for Research in Mathematical Sciences and from the Natural Sciences and Engineering Research Council of Canada. The second and fourth authors are member of the INdAM GNCS group.}
\author[pisa]{Dario Andrea Bini}
\ead{dario.bini@unipi.it}
\author[pisa]{Fabio Durastante\corref{cor1}}
\ead{fabio.durastante@unipi.it}
\author[york]{Sooyeong Kim}
\ead{kimswim@yorku.ca}
\author[pisa]{Beatrice Meini}
\ead{beatrice.meini@unipi.it}
\cortext[cor1]{Corresponding author}
\affiliation[pisa]{organization={Mathematics Department, University of Pisa},%
            addressline={Largo Bruno Pontecorvo, 5}, 
            city={Pisa},
            postcode={56127}, 
            state={PI},
            country={Italy}}
\affiliation[york]{organization={Department of Mathematics and Statistics, York University},%
            addressline={4700 Keele Street}, 
            city={Toronto},
            postcode={M3J 1P3}, 
            state={ON},
            country={CA}}

\begin{abstract}
Given a stochastic matrix $P$ partitioned in four blocks $P_{ij}$, $i,j=1,2$, Kemeny's constant $\kappa(P)$ is expressed in terms of Kemeny's constants of the stochastic complements $P_1=P_{11}+P_{12}(I-P_{22})^{-1}P_{21}$, and $P_2=P_{22}+P_{21}(I-P_{11})^{-1}P_{12}$. Specific cases concerning periodic Markov chains and Kronecker products of stochastic matrices are investigated. %
Bounds to Kemeny's constant of perturbed matrices are given.  Relying on these theoretical results, a divide-and-conquer algorithm for the efficient computation of Kemeny's constant of graphs is designed. Numerical experiments performed on real world problems show the high efficiency and reliability of this algorithm.
\end{abstract}

\begin{highlights}
\item Expression of Kemeny's constant employing the constants of stochastic complements.
\item New recursive algorithms for computing Kemeny's constant 
\end{highlights}

\begin{keyword}
Markov chains \sep Kemeny's constant \sep divide-and-conquer algorithm
\MSC[2010] 60J22 \sep 65C40 \sep 65F15
\end{keyword}

\end{frontmatter}

\section{Introduction}

Given an ergodic, finite, discrete-time, time-homogeneous Markov chain, Kemeny's constant is the expected time for the Markov chain to travel between randomly chosen states, where these states are sampled according to the stationary distribution. Originally defined in \cite{ks} as the expected time to reach a randomly-chosen state from a fixed starting state, this quantity is independent of the choice of initial state. Various intuitive explanations for the constancy have been provided in \cite{bini2018kemeny,doyle2009kemeny,kirkland2021directed}. Moreover, Kemeny's constant has many applications to a variety of subjects, including the study of road traffic networks \cite{altafini,crisostomi2011google}, disease spread \cite{kim2023effectiveness,Emma:Kemeny}, and many others.

In addition, Kemeny's constant has recently received significant attention within the graph theory community. Such constant finds special relevance in the study of random walks on graphs. In the context of random walks, Kemeny's constant is used as a measure of connectivity of the graph: the smaller the constant, the faster a random walker moves around the graph.  As a graph invariant, much research has been dedicated to understanding how the structure of a graph influences Kemeny's constant. Extremal Kemeny's constant has been studied in \cite{breen19,ciardo,kim23}. The impact of edge addition/removal on this quantity has also been explored in \cite{altafini,CiardoparadoxicalTwins,faught20221,kim2022families,kirkland2023edge,KirklandZeng}. Recently, work in \cite{kim2023bounds} provides insights into the interplay between a graph and its complement regarding Kemeny's constant.

The pursuit of reducing the computational complexity of Kemeny's constant of a Markov chain with $n$ states, which is $O(n^3)$, is interesting. Randomized approaches for the direct approximation of Kemeny's constant have been used in \cite{liMC,Xukem}. Additionally, exploring computational questions where partial information is available without computing from scratch has been investigated. Work in \cite{altafini} investigated computation of Kemeny's constant for a graph obtained by removing an edge. Moreover, work from \cite{breen23} provided an explicit formula for Kemeny's constant for graphs with bridges in terms of some quantities on the subgraphs resulting from the deletion of the bridges.

%

%

In this article, we furnish a formula for Kemeny's constant of a Markov chain by utilizing two Markov chains induced from the original, known as \textit{censored (watched) Markov chains} (see \cite{meyer}). Let $P$ be the transition matrix of an irreducible Markov chain with state space $\{1,\ldots,n\}$. Let \begin{equation}\label{eq:P}
	P = \begin{bmatrix}
	P_{11} & P_{12} \\
	P_{21} & P_{22}
	\end{bmatrix},
\end{equation}
where $P_{11}$ and $P_{22}$ are square of size $m\times m$ and $(n-m)\times (n-m)$, respectively. The transition matrices $P_1$ and $P_2$ of the censored Markov chains are given by
\begin{equation}\label{eq:P1}
    \begin{split}
        P_1 = P_{11}+P_{12}(I-P_{22})^{-1}P_{21},\quad \quad P_2 = P_{22}+P_{21}(I-P_{11})^{-1}P_{12}.
    \end{split}
\end{equation}
In particular, we will provide expressions of the difference $\gamma=\kappa(P)-\kappa(P_1)-\kappa(P_2)$, given in terms  of the stationary distribution vector of the Markov chain $P$.

The paper is organized as follows: In Section~\ref{sec:not}, we introduce some notation and review the definitions and properties of stochastic complements and Kemeny's constant. Section~\ref{sec:stochastic-complements} concerns the main theoretical result (Theorem~\ref{th:kappap}), where Kemeny's constant of a Markov chain is related to Kemeny's constants of censored Markov chains. Section \ref{sec:spec}  explores applications of the main result for various structured transition matrices and subsequently determines the minimum Kemeny's constant of a periodic Markov chain. In Section \ref{sec:bounds}, we establish lower and upper bounds for the constant $\gamma$ and bounds on Kemeny's constant of perturbed matrices are given. In Section \ref{sec:algo}, we introduce a divide-and-conquer algorithm to compute $\kappa(P)$ where $P$ is a large and highly sparse matrix, and we present numerical experiments performed with real world matrices to show the effectiveness and reliability of this algorithm.  Section \ref{sec:conc} draws the conclusions.

\section{Notation and preliminaries}\label{sec:not}
A matrix $A=[a_{ij}]\in\mathbb{R}^{m\times n}$ is said to be \textit{nonnegative (resp. positive)}, if $a_{ij}\ge 0$ ($a_{ij}>0$) for any $i,j$, and we write $A\ge 0$ (resp. $A>0$). We denote by $\one_n$ the all ones vector of size $n$. If $n$ is clear from the context, we shall omit the subscript of $\one_n$. A matrix $A$ is said to be \textit{reducible} if there exists a permutation matrix $\Pi$ such that $\Pi A \Pi^T$ is a block upper triangular matrix with square diagonal blocks. If $A$ is not reducible, then we say that $A$ is \textit{irreducible}. A nonnegative matrix $A$ is said to be \textit{stochastic} if $A\one=\one$. For $A\in\mathbb{R}^{n\times n}$, we use $\rho(A)$ to denote the spectral radius of $A$.

A matrix $A$ is a \emph{non-singular (resp. singular)} M-matrix if $A = s I - B$ with $B \geq 0$ and $s > \rho(B)$ (resp. $s=\rho(B)$). It is known that if $A$ is a non-singular M-matrix, then $a_{ii} > 0$, $a_{ij} \leq 0$ for $i \neq j$, and $A^{-1} \ge  0$.

Given a vector $\mathbf{v}=(v_i)\in\mathbb{R}^n$, we denote by $\| \mathbf{v}\|$ the 1-norm of $\mathbf{v}$, \textit{i.e.}, $\| \mathbf{v}\|=\sum_{i=1}^n |v_i|$, and we use $\|\mathbf{v}\|_\infty$ to indicate the infinity norm of $\mathbf{v}$, that is, $\|\mathbf{v}\|_\infty=\max_i|v_i|$. For a matrix $A$, we denote by $\| A \|_\infty$ the norm induced by the infinity norm, that is, $\| A \|_\infty=\max_j \sum_{i=1}^n |a_{ij}|$.

\subsection{Stochastic complement}

Let $P$ be an $n\times n$ irreducible stochastic matrix, with \textit{stationary distribution vector} $\pi$, \textit{i.e.},  $\pi^TP=\pi^T$ and $\pi^T \one=1$. Let $P$ be partitioned into the $2\times 2$ block matrix as in \eqref{eq:P}.
Since $P$ is irreducible, $I-P_{11}$ and $I-P_{22}$ are non-singular M-matrices. It is found in \cite{meyer} that the matrices $P_1$ and $P_2$ in \eqref{eq:P1} are stochastic and irreducible. We call $P_1$ (resp. $P_2$) the \emph{stochastic complement} of $P_{11}$ (resp. $P_{22}$) in $P$. The matrix $P_1$ represents the transition matrix of the Markov chain obtained by censoring the states $\{m+1,\ldots,n\}$, while $P_2$ represents the transition matrix of the Markov chain obtained by censoring the states $\{1,\ldots,m\}$. This suggests the name ``censored Markov chain''.

Let $\pi$ be  partitioned conformally with $P$ so that $\pi^T = \begin{bmatrix}
\pi_1^T & \pi_2^T
\end{bmatrix}$ where $\pi_1\in\mathbb R^m$ and $\pi_2\in\mathbb R^{n-m}$. Denoting by $\hat{\pi}_i$ the stationary distribution vector of $P_i$, we have
\[
\hat \pi_i=\frac 1{\|\pi_i \|}\pi_i,~~i=1,2. 
\]
Moreover, setting $\oldalpha_i=\| \pi_i \|$, we see that
\begin{equation}\label{eq:agg1}
\pi^T=[\oldalpha_1\hat\pi_1^T,\oldalpha_2\hat\pi_2^T], %
\end{equation}
and the vector $\alpha^T=[\oldalpha_1,\oldalpha_2]$ satisfies 
\begin{equation}\label{eq:agg2}
\alpha^T S=\alpha^T,~~\alpha^T \one=1,\quad
S=\begin{bmatrix}
\hat\pi_1^T P_{11}\one_m &\hat\pi_1^T P_{12}\one_{n-m}\\
\hat\pi_2^T P_{21}\one_m&\hat\pi_2^T P_{22}\one_{n-m}
\end{bmatrix}.
\end{equation}
That is, $\alpha$ is the stationary distribution vector of the aggregated matrix $S$, which is stochastic and irreducible.

\subsection{Kemeny's constant} 

Let $P$ be the transition matrix of an ergodic, finite, discrete-time, time-homogeneous Markov chain. %
\emph{Kemeny's constant} of $P$, denoted by $\kappa(P)$, is the expected number of time steps required by the Markov chain to go from a given state $i$ to a random state $j$, sampled according to the stationary distribution $\pi$. 
Kemeny's constant has an algebraic characterization, in terms of the trace of a suitable matrix \cite{kemeny,wang2017kemeny}, as stated in the following.

\begin{lemma}
\label{lem:kem}
Let $\mathbf{g},\mathbf{h}\in\mathbb{R}^{n}$ be vectors with $\mathbf{h}^T\mathbf{g}=1$, $\mathbf{h}^T\one \neq 0$, $\pi^T \mathbf{g} \neq 0$. Then, $I-P+\mathbf{g}\mathbf{h}^T$ is non-singular, and
\begin{equation} \label{K1}
    \kappa(P) = \Tr(Z) - \pi^T Z \one.
\end{equation}
where $Z = (I-P+\mathbf{g}\mathbf{h}^T)^{-1}$.
\end{lemma}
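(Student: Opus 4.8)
The plan is to prove the formula \eqref{K1} by showing that its right-hand side is independent of the choice of $\mathbf{g},\mathbf{h}$, and then evaluating it for one convenient choice where the answer is already known. The key preliminary step is to establish that $M := I-P+\mathbf{g}\mathbf{h}^T$ is non-singular under the stated hypotheses. I would do this via the Sherman–Morrison identity in reverse: since $I-P$ is singular with left null vector $\pi^T$ and right null vector $\one$, and since $\mathbf{h}^T\one\neq 0$ and $\pi^T\mathbf{g}\neq 0$ guarantee the rank-one update "hits" both null directions, a short argument shows $Mx=0$ forces $x=0$. Concretely, if $Mx=0$ then $(I-P)x = -(\mathbf{h}^Tx)\mathbf{g}$; left-multiplying by $\pi^T$ gives $0 = -(\mathbf{h}^Tx)(\pi^T\mathbf{g})$, so $\mathbf{h}^Tx=0$, hence $(I-P)x=0$, so $x=c\one$; then $0=\mathbf{h}^Tx = c(\mathbf{h}^T\one)$ forces $c=0$.

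Next I would show the quantity $\Tr(Z)-\pi^TZ\one$ does not depend on $(\mathbf{g},\mathbf{h})$. The cleanest route is the standard identity relating any such generalized inverse to the group inverse $A^\#$ of $A := I-P$: one has
\begin{equation*}
Z = (I-P+\mathbf{g}\mathbf{h}^T)^{-1} = A^\# + \frac{1}{\pi^T\mathbf{g}}\,\one\,\mathbf{u}^T + \mathbf{v}\,\frac{1}{\mathbf{h}^T\one}\,\pi^T
\end{equation*}
for suitable vectors $\mathbf{u},\mathbf{v}$ depending on $\mathbf{g},\mathbf{h}$ (this follows from $A A^\# = A^\# A = I - \one\pi^T$ and a direct verification that the claimed expression is a right and left inverse of $M$; the constraint $\mathbf{h}^T\mathbf{g}=1$ keeps the bookkeeping clean). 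Then $\Tr(Z) = \Tr(A^\#) + \frac{1}{\pi^T\mathbf{g}}\mathbf{u}^T\one + \frac{1}{\mathbf{h}^T\one}\pi^T\mathbf{v}$, while $\pi^TZ\one = \pi^TA^\#\one + \frac{1}{\pi^T\mathbf{g}}\mathbf{u}^T\one + \frac{1}{\mathbf{h}^T\one}\pi^T\mathbf{v}$ (using $\pi^T\one=1$), and the two $(\mathbf{g},\mathbf{h})$-dependent terms cancel in the difference, leaving $\Tr(Z)-\pi^TZ\one = \Tr(A^\#) - \pi^TA^\#\one$, which is manifestly independent of the choice. Alternatively, if one prefers to avoid group inverses, the same cancellation can be obtained by comparing $Z$ for two choices $(\mathbf{g},\mathbf{h})$ and $(\mathbf{g}',\mathbf{h}')$ directly through two applications of Sherman–Morrison.

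Finally I would evaluate the right-hand side for the classical choice $\mathbf{g}=\one$, $\mathbf{h}=\pi$ (which satisfies $\mathbf{h}^T\mathbf{g}=\pi^T\one=1$, $\mathbf{h}^T\one=1\neq 0$, $\pi^T\mathbf{g}=1\neq 0$), giving the fundamental matrix $Z=(I-P+\one\pi^T)^{-1}$; for this $Z$ one has $Z\one=\one$, so $\pi^TZ\one=1$, and $\Tr(Z)-1$ is exactly the textbook formula for Kemeny's constant established in \cite{kemeny}, which I would cite as the definitional identity. Combining the three steps yields \eqref{K1} for every admissible $(\mathbf{g},\mathbf{h})$. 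The main obstacle is the middle step: proving the displayed formula for $Z$ (or, equivalently, carrying out the Sherman–Morrison comparison carefully) requires attention to which products of $\pi$, $\one$, $\mathbf{g}$, $\mathbf{h}$ vanish and which do not, and it is here that all three hypotheses $\mathbf{h}^T\mathbf{g}=1$, $\mathbf{h}^T\one\neq 0$, $\pi^T\mathbf{g}\neq 0$ get used; everything else is routine.
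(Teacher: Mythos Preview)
The paper does not prove this lemma; it states it with a citation to \cite{kemeny,wang2017kemeny} and proceeds. Your proposal is a correct self-contained proof, and it follows essentially the standard route taken in those references. The non-singularity argument is exactly right. For the structural step, the claimed form $Z=A^{\#}+\one\,\mathbf{u}^T+\mathbf{v}\,\pi^T$ can be justified cleanly by observing that $M\one=(\mathbf{h}^T\one)\mathbf{g}$ and $\pi^TM=(\pi^T\mathbf{g})\mathbf{h}^T$ force $M^{-1}\mathbf{g}=\one/(\mathbf{h}^T\one)$ and $\mathbf{h}^TM^{-1}=\pi^T/(\pi^T\mathbf{g})$; then $Z-A^{\#}=(I-A^{\#}M)Z=\one(\pi^TZ)-\frac{1}{\pi^T\mathbf{g}}(A^{\#}\mathbf{g})\pi^T$, which has precisely the rank-two shape you posited. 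Since $\pi^TA^{\#}=0$ and $\pi^T\one=1$, the trace and the $\pi^T(\cdot)\one$ contributions of each rank-one piece coincide and cancel, leaving $\Tr(Z)-\pi^TZ\one=\Tr(A^{\#})$, independent of $(\mathbf{g},\mathbf{h})$; your final evaluation at $\mathbf{g}=\one$, $\mathbf{h}=\pi$ then closes the argument.
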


By choosing $\mathbf{g}=\one$, we find that
\begin{equation}\label{eq:ks}
\kappa(P)=\Tr(Z)-1\quad \text{where}\quad Z = (I-P+\one \mathbf{h}^T)^{-1}.
\end{equation}
Let $\lambda_1 = 1,\lambda_2,\ldots,\lambda_n$ be the eigenvalues of $P$. Since $P$ is irreducible, $\lambda_1$ is simple. By the Brauer theorem \cite{brauer}, the eigenvalues $1-\lambda_i$ of the matrix $I-P$ coincide with the eigenvalues of $I-P+\one \mathbf{h}^T$ except for the eigenvalue $1-\lambda_1=0$ which is mapped into  1. This fact enable us to express Kemeny's constant in terms of the eigenvalues $\lambda_i$ of $P$ as
\begin{equation}\label{eq:kemeig}
    \kappa(P)=\sum_{i=2}^n\frac 1{1-\lambda_i}.
\end{equation}

Recall that given two square stochastic matrices $A,B$ of the same size, the products $AB$ and $BA$ are stochastic as well and have the same characteristic polynomial, see \cite[Theorem 1.3.20]{hj:book}. So, their eigenvalues coincide and have the same algebraic multiplicities. Thus, it follows from \eqref{eq:kemeig} that 
\begin{equation}\label{eq:kemprod}
\kappa(AB)=\kappa(BA).
\end{equation}

A slightly different situation is encountered for non-square matrices. Suppose that $A$ and $B$ are stochastic matrices of size $m\times n$ and $n\times m$, respectively, where $n>m$. Then $AB$ and $BA$ are square stochastic matrices of size $m\times m$ and $n\times n$, respectively. Moreover, their nonzero eigenvalues coincide, thus we deduce from \eqref{eq:kemeig} that
\begin{equation}\label{eq:kemdiff}
    \kappa(BA)=\kappa(AB)+n-m.
\end{equation}

\subsection{Kemeny's constant and graphs}
A graph is a pair $ G = (V, E)$, where $V$ is a finite set of \emph{vertices} of cardinality $|V|=n$, and $E$ is a set of pairs $\{i,j\}$ where $i,j\in V$, called \emph{edges}. 
A graph is \emph{weighted} if it is equipped with a function $a: E \to \mathbb{R}_+$ that associates a non-negative \emph{weight} to each edge. 
For unweighted graphs, the weight of each edge is assumed to be equal to 1.
The \emph{adjacency matrix} of a graph on $n$ vertices is the $n\times n$  matrix $A = [a_{ij}]$ such that $a_{ij}$ is equal to the weight of edge $\{i,j\}$ if $\{i,j\}\in E$, and zero otherwise.

Given a graph $G$, we can associate the Markov chain having transition matrix $P = D^{-1}A$, where $D = \textrm{diag}(\mathbf{d})$, $\mathbf{d}=A\one$, and $A$ is the adjacency matrix of the graph. Here, we assume that $d_i\ne 0$ for any $i$, that is there are no isolated vertices. The matrix $P$ describes a random walk on the graph $ G$.
We can then define Kemeny's constant of the graph as $\kappa(G) := \kappa(P)$.

A graph is said to be \emph{bipartite} if there exist disjoint sets of vertices $V_1$ and $V_2$, of cardinality $m_1$ and $m_2$, respectively,  such that $V=V_1\cup V_2$, and  the vertices in $V_1$ as well as the vertices in $V_2$ are connected by no edges. 
The adjacency matrix $H$ of a bipartite graph can be partitioned into 4 blocks $H_{i,j}$, $i,j=1,2$ where $H_{11}$ and $H_{22}$ are the null square matrices of size $m_1$, and $m_2$, respectively. If the graph is unweighted complete bipartite then the blocks $H_{12}$ and $H_{21}$ of size $m_1\times m_2$, $m_2\times m_1$, respectively,  have all the entries equal to 1.

\section{Kemeny's constant and stochastic complement}\label{sec:stochastic-complements}
In this section we relate Kemeny's constant of the transition matrix $P$ in \eqref{eq:P} to Kemeny's constants of the stochastic complements \eqref{eq:P1}. We begin with introducing the following preliminary result.

\begin{lemma}\label{lem:2}
Let $P$ be the transition matrix given by \eqref{eq:P}, and let $Z=(I-P+\one \mathbf{h}^T)^{-1}$ where $\mathbf{h}\in\mathbb{R}^{n}$ with $\mathbf{h}^T \one=1$. Partition $\mathbf{h}$ conformally with $P$ as $\mathbf{h}^T=\begin{bmatrix}
\mathbf{h}_1^T & \mathbf{h}_2^T
\end{bmatrix}$.  Then
\begin{equation}\label{eq:6}
\Tr(Z)=\Tr(Z_1)+\Tr(Z_2)    
\end{equation}
where
\begin{equation}\label{eq:q11}
\begin{split}
Z_1=(I-P_1+\sigma_2^{-1}\mathbf{u}_1 \mathbf{v}_1^T)^{-1},\quad\quad Z_2=(I-P_2+\sigma_1^{-1}\mathbf{u}_2 \mathbf{v}_2^T)^{-1},
\end{split}
\end{equation}
and
\[
\begin{split}
    &\mathbf{u}_1=\one_m+P_{12}(I-P_{22})^{-1}\one_{n-m},~~\mathbf{u}_2=\one_{n-m}+P_{21}(I-P_{11})^{-1}\one_m,\\
    &\mathbf{v}_1^T=\mathbf{h}_1^T+\mathbf{h}_2^T(I-P_{22})^{-1}P_{21},~~\mathbf{v}_2^T=\mathbf{h}_2^T+\mathbf{h}_1^T(I-P_{11})^{-1}P_{12},\\
    &\sigma_i=1+\mathbf{h}_i^T(I-P_{ii})^{-1}\one \quad \text{for}\quad i=1,2.
\end{split}
\]
\end{lemma}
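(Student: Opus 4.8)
The plan is to compute $\Tr(Z)$ directly from the $2\times 2$ block structure of $M:=I-P+\one\mathbf{h}^T$ using the classical Schur–complement (block inversion) identities, and then to recognise each Schur complement as one of the matrices in \eqref{eq:q11}. Writing $M$ blockwise,
\[
M=\begin{bmatrix}
I-P_{11}+\one_m\mathbf{h}_1^T & -P_{12}+\one_m\mathbf{h}_2^T\\
-P_{21}+\one_{n-m}\mathbf{h}_1^T & I-P_{22}+\one_{n-m}\mathbf{h}_2^T
\end{bmatrix}=:\begin{bmatrix}A_{11}&A_{12}\\A_{21}&A_{22}\end{bmatrix},
\]
we first note that $M$ is nonsingular by Lemma~\ref{lem:kem} (taking $\mathbf{g}=\one$), and that the diagonal blocks are nonsingular: $I-P_{ii}$ is a nonsingular M-matrix, and by the Sherman–Morrison formula the rank-one update $A_{ii}=(I-P_{ii})+\one\mathbf{h}_i^T$ is invertible precisely because $\sigma_i=1+\mathbf{h}_i^T(I-P_{ii})^{-1}\one\neq 0$ (this is automatic whenever $\mathbf{h}\ge 0$, since then $(I-P_{ii})^{-1}\ge I$ forces $\sigma_i\ge 1$, which is the only situation needed later). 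With $M$, $A_{11}$ and $A_{22}$ all invertible, the block inversion formulas identify the $(1,1)$ block of $Z=M^{-1}$ with $S_1^{-1}$ and the $(2,2)$ block with $S_2^{-1}$, where $S_1=A_{11}-A_{12}A_{22}^{-1}A_{21}$ and $S_2=A_{22}-A_{21}A_{11}^{-1}A_{12}$ are the two Schur complements; hence $\Tr(Z)=\Tr(S_1^{-1})+\Tr(S_2^{-1})$.

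It then remains to check $S_1=Z_1^{-1}$ and $S_2=Z_2^{-1}$; since the construction is symmetric under the swap $1\leftrightarrow 2$ (interchanging $P_{11}\leftrightarrow P_{22}$, $P_{12}\leftrightarrow P_{21}$, $\mathbf{h}_1\leftrightarrow\mathbf{h}_2$, $m\leftrightarrow n-m$), it suffices to treat $S_1$. Put $B=(I-P_{22})^{-1}$ and $\mathbf{w}=B\one_{n-m}$, so that $\sigma_2=1+\mathbf{h}_2^T\mathbf{w}$, $\mathbf{u}_1=\one_m+P_{12}\mathbf{w}$ and $\mathbf{v}_1^T=\mathbf{h}_1^T+\mathbf{h}_2^TBP_{21}$. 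Sherman–Morrison gives $A_{22}^{-1}=B-\sigma_2^{-1}\mathbf{w}\mathbf{h}_2^TB$; substituting this into $A_{22}^{-1}A_{21}$, expanding, and using the elementary simplification $1-\sigma_2^{-1}\mathbf{h}_2^T\mathbf{w}=\sigma_2^{-1}$ (that is, $\sigma_2-\mathbf{h}_2^T\mathbf{w}=1$) collapses the rank-one pieces to $A_{22}^{-1}A_{21}=-BP_{21}+\sigma_2^{-1}\mathbf{w}\mathbf{v}_1^T$. Multiplying on the left by $A_{12}=-P_{12}+\one_m\mathbf{h}_2^T$ and subtracting from $A_{11}$, the genuine (full-rank) part produces exactly $I-P_{11}-P_{12}BP_{21}=I-P_1$, while all remaining rank-one contributions recombine, again via $\sigma_2-\mathbf{h}_2^T\mathbf{w}=1$, into the single dyad $\sigma_2^{-1}(\one_m+P_{12}\mathbf{w})\mathbf{v}_1^T=\sigma_2^{-1}\mathbf{u}_1\mathbf{v}_1^T$. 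Thus $S_1=I-P_1+\sigma_2^{-1}\mathbf{u}_1\mathbf{v}_1^T=Z_1^{-1}$, and by symmetry $S_2=I-P_2+\sigma_1^{-1}\mathbf{u}_2\mathbf{v}_2^T=Z_2^{-1}$, which yields \eqref{eq:6}.

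\textbf{Main obstacle.} No structural difficulty is expected; the delicate part is purely the bookkeeping in the last step. The product $A_{12}A_{22}^{-1}A_{21}$ generates several rank-one terms — one from the $\one_m\mathbf{h}_2^T$ summand of $A_{12}$, one from the $\one_{n-m}\mathbf{h}_1^T$ summand of $A_{21}$, and one from the $\mathbf{w}\mathbf{h}_2^TB$ correction in $A_{22}^{-1}$ — and one must organise the expansion so that, after repeated use of $\sigma_2-\mathbf{h}_2^T\mathbf{w}=1$, these fuse into the single advertised term with $(I-P_1)$ emerging cleanly. A secondary point to state explicitly is that the splitting $\Tr(Z)=\Tr(S_1^{-1})+\Tr(S_2^{-1})$ presupposes the invertibility of both diagonal blocks, which is exactly the role of the hypothesis $\sigma_1\sigma_2\neq 0$ (guaranteed, in particular, when $\mathbf{h}\ge 0$).
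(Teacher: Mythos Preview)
Your proposal is correct and follows essentially the same route as the paper: identify the diagonal blocks of $Z=M^{-1}$ as inverses of the two Schur complements of $M=I-P+\one\mathbf{h}^T$, apply Sherman--Morrison to $(I-P_{22}+\one\mathbf{h}_2^T)^{-1}$, and collapse the resulting rank-one terms using $\sigma_2-\mathbf{h}_2^T\mathbf{w}=1$. Your explicit flag that the block-inversion identity requires $\sigma_1,\sigma_2\ne 0$ (so that $A_{11},A_{22}$ are invertible) is a point the paper leaves implicit in the statement via the presence of $\sigma_i^{-1}$.
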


\begin{proof}
By partitioning $Z$ conformally with $P$ in \eqref{eq:P}, we find that
\[
Z=	\begin{bmatrix}
	I-P_{11}+\one_m \mathbf{h}_1^T & -P_{12}+\one_m \mathbf{h}_2^T \\
	-P_{21}+\one_{n-m} \mathbf{h}_1^T & I-P_{22}+\one_{n-m} \mathbf{h}_2^T
	\end{bmatrix}^{-1}.
\]
We shall omit the subscripts of all ones vectors. Then 
\[
Z=	\begin{bmatrix}
	Z_{1} & * \\ * & Z_{2}
	\end{bmatrix},
\]
where %
\begin{equation}\label{eq:q12}
\begin{split}
&Z_1=\left(I-P_{11} +\one \mathbf{h}_1^T-(P_{12}-\one \mathbf{h}_2^T)(I-P_{22}+\one \mathbf{h}_2^T)^{-1} (P_{21}-\one \mathbf{h}_1^T)\right)^{-1},\\
&Z_2=\left(I-P_{22} +\one \mathbf{h}_2^T-(P_{21}-\one \mathbf{h}_1^T)(I-P_{11}+\one \mathbf{h}_1^T)^{-1} (P_{12}-\one \mathbf{h}_2^T)\right)^{-1},
\end{split}
\end{equation}
and $*$ denotes a generic entry. In particular,
$\Tr(Z)=\Tr(Z_1)+\Tr(Z_2)$.
By using the Sherman--Morrison formula, we find that
\[
(I-P_{22}+\one \mathbf{h}_2^T)^{-1}=(I-P_{22})^{-1}-\sigma_2^{-1}(I-P_{22})^{-1}\one \mathbf{h}_2^T(I-P_{22})^{-1},
\]
where $\sigma_2=1+\mathbf{h}_2^T(I-P_{22})^{-1}\one$. Let $\mathbf{r}_1 = (I-P_{22})^{-1}\mathbf{1}$ and $\mathbf{r}_2^T = \mathbf{h}_2^T(I-P_{22})^{-1}$. Replacing this expression in the first formula in \eqref{eq:q12} yields 
\begin{align*}
    Z_1^{-1} =&\; I-P_1+\mathbf{1}\left(\left(1-\mathbf{r}_2^T\mathbf{1}+\sigma_2^{-1}(\mathbf{r}_2^T\mathbf{1})(\mathbf{r}_2^T\mathbf{1})\right)\mathbf{h}_1^T+(1-\sigma_2^{-1}\mathbf{r}_2^T\mathbf{1})\mathbf{r}_2^TP_{21}\right)\\
    &+\;P_{12}\mathbf{r}_1\left(\sigma_2^{-1}\mathbf{r}_2^TP_{21}+(1-\sigma_2^{-1} \mathbf{r}_2^T\mathbf{1})\mathbf{h}_1^T\right).
\end{align*}
Since $\sigma_2 = 1+\mathbf{r}_2^T\mathbf{1}$, we obtain the formula for $Z_1$ in \eqref{eq:q11}. We proceed similarly for $Z_2$.
\end{proof}

Observe that the vectors $\mathbf{v}_1$ and $\mathbf{v}_2$ in Lemma~\ref{lem:2} depend on the vector $\mathbf{h}$ that can be chosen arbitrarily under the condition $\mathbf{h}^T \one=1$. In the next proposition we show that $\mathbf{h}$ can be chosen with $\mathbf{v}_1=\hat\pi_1$ and $\mathbf{v}_2=\hat \pi_2$.
 This choice will allow us to express Kemeny's constant of $P$ in terms of Kemeny's constant of the stochastic complements $P_1$ and $P_2$, which will be shown in Theorem~\ref{th:kappap}.

\begin{proposition}\label{prop:appunti}
    There exist vectors $\mathbf{h}_1\in\mathbb{R}^m$ and $\mathbf{h}_2\in\mathbb{R}^{n-m}$ such that
    \begin{equation}\label{eq:hi}
    \begin{split}
&    \mathbf{h}_1^T+\mathbf{h}_2^T(I-P_{22})^{-1}P_{21}=\hat \pi_1^T,\\
 &   \mathbf{h}_2^T+\mathbf{h}_1^T(I-P_{11})^{-1}P_{12}=\hat \pi_2^T,
 \end{split}
    \end{equation}
    and $\mathbf{h}_1^T \one_m+\mathbf{h}_2^T\one_{n-m} =1$.
\end{proposition}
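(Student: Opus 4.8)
The plan is to read \eqref{eq:hi} as a linear system in the row vectors $\mathbf{h}_1^T,\mathbf{h}_2^T$, eliminate $\mathbf{h}_1$, and solve the resulting (singular) system for $\mathbf{h}_2$ by the Fredholm alternative. Put $R=(I-P_{22})^{-1}P_{21}$ and $Q=(I-P_{11})^{-1}P_{12}$, so that \eqref{eq:hi} reads
\[
\mathbf{h}_1^T+\mathbf{h}_2^T R=\hat\pi_1^T,\qquad \mathbf{h}_2^T+\mathbf{h}_1^T Q=\hat\pi_2^T .
\]
The row-sum identities for $P$, namely $P_{12}\one_{n-m}=(I-P_{11})\one_m$ and $P_{21}\one_m=(I-P_{22})\one_{n-m}$, give $Q\one_{n-m}=\one_m$ and $R\one_m=\one_{n-m}$ (in fact $R,Q\ge 0$, being products of a nonnegative matrix with the inverse of a nonsingular M-matrix, but only the row-sum relations are needed). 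Using the first equation to substitute $\mathbf{h}_1^T=\hat\pi_1^T-\mathbf{h}_2^T R$ into the second reduces everything to finding a solution of
\begin{equation}\label{eq:h2red}
\mathbf{h}_2^T(I-RQ)=\hat\pi_2^T-\hat\pi_1^T Q ,
\end{equation}
after which $\mathbf{h}_1$ is defined by $\mathbf{h}_1^T=\hat\pi_1^T-\mathbf{h}_2^T R$.

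Since $I-RQ$ is singular, the heart of the argument is the consistency of \eqref{eq:h2red}, and this is the step I expect to be the main obstacle. I would first observe that, because $P_{21}=(I-P_{22})R$ and $Q=(I-P_{11})^{-1}P_{12}$, the stochastic complement $P_2$ of \eqref{eq:P1} satisfies
\[
I-P_2=I-P_{22}-(I-P_{22})RQ=(I-P_{22})(I-RQ),
\]
hence $I-RQ=(I-P_{22})^{-1}(I-P_2)$. As $P_2$ is irreducible and stochastic, $1$ is a simple eigenvalue of $P_2$, so $\ker(I-P_2)=\operatorname{span}\{\one_{n-m}\}$; left-multiplying by the nonsingular matrix $(I-P_{22})^{-1}$ does not change the kernel, whence $\ker(I-RQ)=\operatorname{span}\{\one_{n-m}\}$ as well. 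By the Fredholm alternative, \eqref{eq:h2red} is solvable precisely when its right-hand side is orthogonal to $\one_{n-m}$, and indeed
\[
(\hat\pi_2^T-\hat\pi_1^T Q)\one_{n-m}=\hat\pi_2^T\one_{n-m}-\hat\pi_1^T(Q\one_{n-m})=1-\hat\pi_1^T\one_m=0,
\]
using $Q\one_{n-m}=\one_m$ and that $\hat\pi_1,\hat\pi_2$ are probability vectors. So a solution $\mathbf{h}_2$ of \eqref{eq:h2red} exists.

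It then remains to check that the constructed pair meets all three requirements. The first equation of \eqref{eq:hi} holds by the definition of $\mathbf{h}_1$; the second follows from \eqref{eq:h2red}, since
\[
\mathbf{h}_2^T+\mathbf{h}_1^T Q=\mathbf{h}_2^T+(\hat\pi_1^T-\mathbf{h}_2^T R)Q=\mathbf{h}_2^T(I-RQ)+\hat\pi_1^T Q=\hat\pi_2^T .
\]
For the normalization, $R\one_m=\one_{n-m}$ and $\hat\pi_1^T\one_m=1$ give
\[
\mathbf{h}_1^T\one_m+\mathbf{h}_2^T\one_{n-m}
=\bigl(\hat\pi_1^T-\mathbf{h}_2^T R\bigr)\one_m+\mathbf{h}_2^T\one_{n-m}
=1-\mathbf{h}_2^T\one_{n-m}+\mathbf{h}_2^T\one_{n-m}=1 ,
\]
so $\mathbf{h}_1^T\one_m+\mathbf{h}_2^T\one_{n-m}=1$ comes for free. (If wanted, one can also describe the full solution set: $\mathbf{h}_2$ is determined by \eqref{eq:h2red} up to a multiple of the left null vector $\hat\pi_2^T(I-P_{22})$ of $I-RQ$, hence $\mathbf{h}$ up to a multiple of $\bigl[-\hat\pi_2^T P_{21}\ \ \hat\pi_2^T(I-P_{22})\bigr]$, which is itself orthogonal to $\one_n$; but existence, which is all that is claimed, follows from the above.)
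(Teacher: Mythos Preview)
Your proof is correct and follows essentially the same route as the paper's: eliminate one of the unknown vectors, recognize the resulting coefficient matrix as singular with one-dimensional kernel spanned by $\one$, and verify the Fredholm/Rouch\'e--Capelli consistency condition by checking that the right-hand side annihilates $\one$. The only cosmetic differences are that the paper eliminates $\mathbf{h}_2$ rather than $\mathbf{h}_1$, and that you are more explicit in identifying $I-RQ=(I-P_{22})^{-1}(I-P_2)$ to justify that the kernel is exactly one-dimensional.
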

\begin{proof}
Observe that \eqref{eq:hi} is equivalent to
\begin{equation}\label{eq:h2}
\begin{split}
& \mathbf{h}_1^T\left( I-(I-P_{11})^{-1}P_{12}(I-P_{22})^{-1}P_{21} \right)=\hat \pi_1^T -\hat\pi_2^T (I-P_{22})^{-1}P_{21},\\
& \mathbf{h}_2^T= \hat \pi_2^T - \mathbf{h}_1^T(I-P_{11})^{-1}P_{12}.
\end{split}
\end{equation}
Since the matrix $T=(I-P_{11})^{-1}P_{12}(I-P_{22})^{-1}P_{21}$ is stochastic, the vector $\one_m$ is orthogonal to the rows of $I-T$. On the other hand, 
since $(I-P_{22})^{-1}P_{21}$ is stochastic and $\hat \pi^T_1 \one_m=\hat\pi_2^T \one_{n-m}=1$, the vector $\one_m$ is orthogonal to the right-hand side of the first equation in \eqref{eq:h2}. Hence, the system has a solution by the Rouch\'{e}--Capelli Theorem \cite[Section~0.2.4]{hj:book}. If $\mathbf{h}_1$ is a solution, then $\mathbf{h}_2$ can be recovered from the second equation in \eqref{eq:h2}.
By multiplying to the right by $\one_m$  the first equation in \eqref{eq:hi}, we find that $\mathbf{h}_1^T \one_m + \mathbf{h}_2^T \one_{n-m}=1$.
\end{proof}

Proposition~\ref{prop:appunti}, together with Equation \eqref{eq:6} and Lemma \ref{lem:kem}, implies the following result.

\begin{theorem}\label{th:kappap}
Let $P$ be a stochastic irreducible matrix, partitioned as in \eqref{eq:P}, and denote by $P_1$ and $P_2$ the stochastic complements \eqref{eq:P1}. Then
\begin{equation}\label{formula0}
  \kappa(P) = \kappa(P_1)+\kappa(P_2)+\gamma, 
\end{equation}
where
{
\begin{equation}\label{eq:gamma}
\gamma=
\begin{bmatrix}
\widehat \pi_1^T & -\widehat \pi_2^T 
\end{bmatrix}
(I-P+\mathbf{u} \mathbf{v}^T)^{-1}
\begin{bmatrix}
\| \pi_2\| \one \\
-\| \pi_1 \| \one
\end{bmatrix},
\end{equation}
and $\mathbf{u}$, $\mathbf{v}\in\mathbb{C}^n$ are any vectors such that
$\mathbf{v}^T\one\ne 0$ and $\pi^T \mathbf{u}\ne 0$.}
\end{theorem}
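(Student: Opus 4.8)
The plan is to read off $\kappa(P)$ from $\Tr(Z)$ via \eqref{eq:ks}, split $\Tr(Z)$ using Lemma~\ref{lem:2} with the particular $\mathbf h$ produced by Proposition~\ref{prop:appunti}, turn each block trace into a Kemeny's constant by a Brauer-type spectral computation, and finally recognise the leftover scalar as the bilinear form in \eqref{eq:gamma}.

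First I would fix $\mathbf h$ as in Proposition~\ref{prop:appunti}, so that the vectors $\mathbf v_1,\mathbf v_2$ of Lemma~\ref{lem:2} equal $\hat\pi_1,\hat\pi_2$. With $Z=(I-P+\one\mathbf h^T)^{-1}$, \eqref{eq:ks} gives $\kappa(P)=\Tr(Z)-1$; Lemma~\ref{lem:2} gives $\Tr(Z)=\Tr(Z_1)+\Tr(Z_2)$ with $Z_i=(I-P_i+\mathbf g_i\hat\pi_i^T)^{-1}$ for suitable vectors $\mathbf g_i$ (namely $\mathbf g_1=\sigma_2^{-1}\mathbf u_1$, $\mathbf g_2=\sigma_1^{-1}\mathbf u_2$); and $Z\one=\one$, since $(I-P+\one\mathbf h^T)\one=\one$. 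Since $\hat\pi_i^T(I-P_i)=0$, the Brauer theorem shows the eigenvalues of $Z_i^{-1}=I-P_i+\mathbf g_i\hat\pi_i^T$ are those of $I-P_i$ with $0$ replaced by $c_i:=\hat\pi_i^T\mathbf g_i$ (nonzero, since $Z_i$ is invertible); hence \eqref{eq:kemeig} applied to $P_i$ gives $\Tr(Z_i)=c_i^{-1}+\kappa(P_i)$, while $\hat\pi_i^TZ_i^{-1}=c_i\hat\pi_i^T$ gives $\hat\pi_i^TZ_i\one=c_i^{-1}$. Thus $\Tr(Z_i)=\kappa(P_i)+\hat\pi_i^TZ_i\one$ and
\[
\kappa(P)=\kappa(P_1)+\kappa(P_2)+\bigl(\hat\pi_1^TZ_1\one+\hat\pi_2^TZ_2\one-1\bigr),
\]
so it remains to identify $\hat\pi_1^TZ_1\one+\hat\pi_2^TZ_2\one-1$ with the right-hand side of \eqref{eq:gamma}.

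For this last step put $\mathbf w^T=\begin{bmatrix}\hat\pi_1^T & -\hat\pi_2^T\end{bmatrix}$ and $\mathbf z=\begin{bmatrix}\|\pi_2\|\one \\ -\|\pi_1\|\one\end{bmatrix}$, and note $\mathbf w^T\one=0$ and $\pi^T\mathbf z=0$ (by \eqref{eq:agg1} and $\hat\pi_i^T\one=1$). I would first check that $\mathbf w^T(I-P+\mathbf u\mathbf v^T)^{-1}\mathbf z$ does not depend on the admissible pair $(\mathbf u,\mathbf v)$: for $M=I-P+\mathbf u\mathbf v^T$ one has $M\one=(\mathbf v^T\one)\mathbf u$ and $\pi^TM=(\pi^T\mathbf u)\mathbf v^T$, so $M^{-1}\mathbf u\in\operatorname{span}\{\one\}$ and $\mathbf v^TM^{-1}\in\operatorname{span}\{\pi^T\}$; the resolvent identity $M_1^{-1}-M_2^{-1}=M_1^{-1}(\mathbf u_2\mathbf v_2^T-\mathbf u_1\mathbf v_1^T)M_2^{-1}$ then writes $M_1^{-1}-M_2^{-1}$ as a sum of a matrix whose columns lie in $\operatorname{span}\{\one\}$ (killed by $\mathbf w^T$ on the left) and one whose rows lie in $\operatorname{span}\{\pi^T\}$ (killed by $\mathbf z$ on the right), so that $\mathbf w^T(M_1^{-1}-M_2^{-1})\mathbf z=0$. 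Hence I may take $(\mathbf u,\mathbf v)=(\one,\mathbf h)$, for which $(I-P+\mathbf u\mathbf v^T)^{-1}=Z$. Writing $\mathbf e\in\mathbb R^n$ for the vector with first $m$ entries $1$ and the rest $0$ and using $\|\pi_1\|+\|\pi_2\|=1$, we have $\mathbf z=\mathbf e-\|\pi_1\|\one$, so $Z\one=\one$ and $\mathbf w^T\one=0$ give $\mathbf w^TZ\mathbf z=\mathbf w^TZ\mathbf e$. Expanding this with the block form $Z=\begin{bmatrix}Z_1 & Z_{12}\\ Z_{21} & Z_2\end{bmatrix}$ of Lemma~\ref{lem:2} and substituting $Z_{21}\one=\one-Z_2\one$ (again from $Z\one=\one$) yields exactly $\hat\pi_1^TZ_1\one+\hat\pi_2^TZ_2\one-1$, which finishes the proof.

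I expect the $(\mathbf u,\mathbf v)$-independence to be the only genuinely delicate point: the scalars $c_i$, and with them each $\Tr(Z_i)$ in isolation, do depend on the non-unique solution $\mathbf h$ of \eqref{eq:hi}, so the argument succeeds only because the combination $\hat\pi_1^TZ_1\one+\hat\pi_2^TZ_2\one-1$ is invariant and because the outer vectors in \eqref{eq:gamma} are orthogonal to $\one$ and to $\pi$, respectively. The remaining manipulations are the routine Sherman--Morrison/Brauer bookkeeping already carried out in Lemma~\ref{lem:2} and in the derivation of \eqref{eq:kemeig}.
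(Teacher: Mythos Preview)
Your proof is correct, and the first half---choosing $\mathbf h$ via Proposition~\ref{prop:appunti}, applying Lemma~\ref{lem:2}, and using Brauer's theorem to extract $\kappa(P_i)$ from $\Tr(Z_i)$---follows the same path as the paper. The second half, however, is organised differently. The paper computes the replaced eigenvalue explicitly: from the concrete formulas for $\mathbf u_1,\mathbf u_2$ in Lemma~\ref{lem:2} it obtains $\hat\pi_1^T\mathbf u_1=1/\|\pi_1\|$ and $\hat\pi_2^T\mathbf u_2=1/\|\pi_2\|$, so that $\gamma=\sigma_2\|\pi_1\|+\sigma_1\|\pi_2\|-1$. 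It then rewrites this as $\|\pi_1\|\mathbf k_2^T\one+\|\pi_2\|\mathbf k_1^T\one$ with $\mathbf k_i^T=\mathbf h_i^T(I-P_{ii})^{-1}$, shows that $[\mathbf k_1^T,\,-\mathbf k_2^T]$ solves the singular system $[\mathbf k_1^T,\,-\mathbf k_2^T](I-P)=[\hat\pi_1^T,\,-\hat\pi_2^T]$, and invokes an external result on generalised inverses to obtain \eqref{eq:gamma}. Your route bypasses all of this: you never evaluate $c_i$ explicitly, instead observing the clean identity $c_i^{-1}=\hat\pi_i^TZ_i\one$; you prove the $(\mathbf u,\mathbf v)$-independence of \eqref{eq:gamma} directly from the resolvent identity and the orthogonalities $\mathbf w^T\one=0$, $\pi^T\mathbf z=0$; and you finish by a short block computation with the specific choice $(\one,\mathbf h)$. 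This is more self-contained (no external citation) and arguably cleaner, at the cost of not producing the intermediate closed form $\gamma=\sigma_2\|\pi_1\|+\sigma_1\|\pi_2\|-1$ that the paper later exploits.
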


\begin{proof}
From \eqref{eq:ks}, $\kappa(P)=\Tr(Z)-1$, where $Z=(I-P+\one \mathbf{h}^T)^{-1}$ and $\mathbf{h}$ is a vector such that $\mathbf{h}^T \one=1$. Partition $\mathbf{h}$ as $\mathbf{h}^T=[\mathbf{h}_1^T,\mathbf{h}_2^T]$, where $\mathbf{h}_1$ and $\mathbf{h}_2$ satisfy \eqref{eq:hi}. We shall maintain the same notation in Lemma~\ref{lem:2}. Then $\mathbf{v}_1^T=\hat\pi_1^T$ and $\mathbf{v}_2^T=\hat\pi_2^T$.
Since $\hat\pi_1^T(I-P_1)=0$, the eigenvalues of $I-P_1+\sigma_2^{-1}\mathbf{u}_1\hat \pi_1^T$ are the eigenvalues of $I-P_1$, except for the eigenvalue equal to zero, which is replaced by $\sigma_2^{-1}\hat\pi_1^T \mathbf{u}_1$. Similarly, the eigenvalues of $I-P_2+\sigma_1^{-1}\mathbf{u}_2\hat \pi_2^T$ are the eigenvalues of $I-P_2$, except for the eigenvalue equal to zero, which is replaced by $\sigma_1^{-1}\hat\pi_2^T \mathbf{u}_2$. 
In view of \eqref{eq:6}, $\Tr(Z)=\Tr(Z_1)+\Tr(Z_2)$. 
From the expression of $\mathbf{u}_1$ in Lemma \ref{lem:2}, we find that 
\[
\hat\pi_1^T \mathbf{u}_1=1+\hat\pi_1^T P_{12}(I-P_{22})^{-1}\one_{n-m}=
1+\frac{\pi_2^T \one_{n-m}}{\| \pi_1\| }=1+\frac{\|\pi_2\|}{\| \pi_1\|}=\frac{1}{\| \pi_1 \|}.
\]
Hence, $\Tr(Z_1)=\kappa(P_1)+\sigma_2{\| \pi_1\|}$. Similarly, $\hat\pi_2^T \mathbf{u}_2=\frac{1}{\| \pi_2 \|}$ and so $\Tr(Z_2)=\kappa(P_2)+\sigma_1{\| \pi_2\| }$. Since
$\Tr(Z)=\kappa(P_1)+\kappa(P_2)+{\sigma_2}{\| \pi_1\| }+{\sigma_1}{\|\pi_2 \|}$, we have $\kappa(P)=\kappa(P_1)+\kappa(P_2)+\gamma$ where 
$\gamma={\sigma_2}{\| \pi_1\| }+{\sigma_1}{\|\pi_2 \|}-1$.

Set $\mathbf{k}_i^T=\mathbf{h}_i^T(I-P_{ii})^{-1}$, $i=1,2$. Then 
\[
\gamma=\| \pi_1\| \mathbf{k}_2^T\one + \| \pi_2\| \mathbf{k}_1^T\one.
\]
We find from \eqref{eq:hi} that $\mathbf{k}_1$ and $\mathbf{k}_2$ solve the linear system
\begin{equation}\label{eq:systk1k2}
\begin{bmatrix}
\mathbf{k}_1^T & -\mathbf{k}_2^T
\end{bmatrix}
\begin{bmatrix}
I-P_{11} & -P_{12}\\
-P_{21} & I-P_{22}
\end{bmatrix}=
\begin{bmatrix}
\widehat \pi_1^T & -\widehat \pi_2^T 
\end{bmatrix}.
\end{equation}
Since $(I-P)\one=0$ and $\pi^T(I-P)=0$, this system is consistent.
 {According to \cite[Theorem 2]{kemeny}, a solution of such a system is
    \[
\begin{bmatrix}
    \mathbf{k}_1^T & -\mathbf{k}_2^T
\end{bmatrix}=
\begin{bmatrix}
\widehat \pi_1^T & -\widehat \pi_2^T 
\end{bmatrix}\widehat Z
    \]
    where
$\widehat Z=(I - P + \mathbf{u} \mathbf{v}^T)^{-1}$, 
and $\mathbf{u}$, $\mathbf{v}\in\mathbb{C}^n$ are such that $\mathbf{v}^T\one\ne 0$ and $\pi^T \mathbf{u}\ne 0$.
Therefore we arrive at \eqref{eq:gamma}.
}
\end{proof}

{
By choosing $\mathbf{u}=\one$ and $\mathbf{v}=\pi$, we have}
\[\begin{split}
(I-P+\one \pi^T)^{-1}
\begin{bmatrix}
\| \pi_2\| \one \\
-\| \pi_1 \| \one
\end{bmatrix}&=
\left(I+\sum_{j=1}^\infty(P^j - \one \pi^T) \right) \begin{bmatrix} \|\pi_2\| \one \\
-\| \pi_1 \| \one \end{bmatrix}\\
&=
\sum_{j=1}^\infty \left( P^j \begin{bmatrix} \| \pi_2\| \one \\
-\| \pi_1 \| \one \end{bmatrix}\right)
\end{split}\]
therefore
\[
\gamma=\sum_{j=1}^\infty \left( \begin{bmatrix} \widehat \pi_1^T & -\widehat \pi_2^T 
\end{bmatrix}P^j \begin{bmatrix} \| \pi_2\| \one \\
-\| \pi_1 \| \one \end{bmatrix}\right).
\]
By partitioning $P^j$ according to the partitioning of $P$, we have
\[
P^j=\begin{bmatrix}
P_{11}^{(j)} & P_{12}^{(j)}\\
P_{21}^{(j)} & P_{22}^{(j)}
\end{bmatrix}.
\]
Since $P^j\one=\one$,
we find that 
\[
\begin{bmatrix} \widehat \pi_1^T & -\widehat \pi_2^T 
\end{bmatrix}
P^j \begin{bmatrix} \| \pi_2\| \one \\
-\| \pi_1 \| \one \end{bmatrix}=
\widehat \pi_1^T P_{11}^{(j)}\one + \widehat \pi_2^T P_{22}^{(j)}\one -1 = -\lambda_2^{(j)},
\]
where $\lambda_2^{(j)}$ is the eigenvalue different from 1 of the $2\times 2$ aggregated matrix
\[
\begin{bmatrix}
\widehat \pi_1^T P_{11}^{(j)}\one & \widehat \pi_1^T P_{12}^{(j)}\one \\
\widehat \pi_2^T P_{21}^{(j)}\one & 
\widehat \pi_2^T P_{22}^{(j)}\one
\end{bmatrix}.
\]

{
Since  $\pi^T(I-P+\one \pi^T)^{-1}=\pi^T$, $(I-P+\one \pi^T)^{-1}\one=\one$, and $\pi^T\one=1$ we may conclude that
    \[
\gamma=\left(r \pi^T+
\begin{bmatrix}
\widehat \pi_1^T & -\widehat \pi_2^T 
\end{bmatrix}\right)
(I-P+\one \pi^T)^{-1}
\left(s \one +\begin{bmatrix}
\| \pi_2\| \one \\
-\| \pi_1 \| \one
\end{bmatrix}\right)-r s,
    \]
    for any scalars $r$ and $s$.
Other expressions for $\gamma$ are
 stated by the following:
\begin{proposition}
We have
\begin{equation}\label{formula1}
\gamma = \|\pi_1\|\theta-\|\pi_2\|, 
\end{equation}
where $\theta$ has one of the following equivalent expressions
\begin{align}\label{formula for theta2}
\theta 
& = \begin{bmatrix}
\mathbf{0}^T & \hat{\pi}_2^T
\end{bmatrix}\left(I - P + \begin{bmatrix}
\one \\ \mathbf{0}
\end{bmatrix}\begin{bmatrix}
\hat{\pi}_1^T & \mathbf{0}^T
\end{bmatrix} \right)^{-1} \begin{bmatrix}
\mathbf{0} \\ \one
\end{bmatrix}\\\label{formula for theta4}
& = 
\hat{\pi}_2^T\left( I
+ (I-P_{22})^{-1}P_{21}(I-P_1+\one\hat{\pi}_1^T)^{-1}P_{12}
\right)
(I-P_{22})^{-1}\one\\\label{formula for theta3}
& = \hat{\pi}_2^T \left(I - P_{22} - P_{21}(I-P_{11}+\one\hat{\pi}_1^T)^{-1}P_{12}\right)^{-1}\one
\\\label{formula for theta5}
& = \hat{\pi}_2^T \left(I - P_2 + \frac{1}{1+\hat{\pi}_1^TS\one}P_{21}S\one
\hat{\pi}_1^TSP_{12}\right)^{-1}\one,\quad S=(I-P_{11})^{-1}.
\end{align}
\end{proposition}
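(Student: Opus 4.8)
The plan is to start from the expression for $\gamma$ already established in Theorem~\ref{th:kappap}, namely
\[
\gamma = \|\pi_1\|\,\mathbf{k}_2^T\one + \|\pi_2\|\,\mathbf{k}_1^T\one,
\qquad
\begin{bmatrix}\mathbf{k}_1^T & -\mathbf{k}_2^T\end{bmatrix}(I-P)=\begin{bmatrix}\widehat\pi_1^T & -\widehat\pi_2^T\end{bmatrix},
\]
where in block form $\mathbf{k}_i^T=\mathbf{h}_i^T(I-P_{ii})^{-1}$. The first move is to eliminate the quantity $\mathbf{k}_1^T\one$ in favour of $\mathbf{k}_2^T\one$: multiplying the consistency system \eqref{eq:systk1k2} on the right by $\one$ and using $(I-P)\one=0$ gives $\mathbf{k}_1^T\one=\mathbf{k}_2^T\one$ up to the normalization already noted, but more usefully, combining with $\mathbf{h}_1^T\one_m+\mathbf{h}_2^T\one_{n-m}=1$ from Proposition~\ref{prop:appunti} lets me write everything through a single scalar. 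Setting $\theta:=\mathbf{k}_2^T\one=\mathbf{h}_2^T(I-P_{22})^{-1}\one$ (this is $\sigma_2-1$ in the notation of Lemma~\ref{lem:2}), and using $\sigma_2\|\pi_1\|+\sigma_1\|\pi_2\|-1=\gamma$ together with $\|\pi_1\|+\|\pi_2\|=1$, I would derive $\gamma=\|\pi_1\|\theta-\|\pi_2\|+(\text{something})$; the bookkeeping here reproduces \eqref{formula1}, so the real content is to produce the four equivalent formulas \eqref{formula for theta2}--\eqref{formula for theta5} for $\theta$.

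For \eqref{formula for theta2}: I would solve the block system \eqref{eq:systk1k2} explicitly. It reads $\mathbf{k}_1^T(I-P_{11}) - \mathbf{k}_2^T(-P_{21}) = \widehat\pi_1^T$... — rather than chase signs, the cleaner route is to observe that $\begin{bmatrix}\mathbf{k}_1^T & -\mathbf{k}_2^T\end{bmatrix}$ is \emph{a} solution of a singular consistent system and that adding any multiple of $\pi^T$ keeps it a solution; then invoke \cite[Theorem~2]{kemeny} again with a \emph{different} rank-one correction, specifically $\mathbf{u}\mathbf{v}^T=\begin{bmatrix}\one\\\mathbf0\end{bmatrix}\begin{bmatrix}\widehat\pi_1^T & \mathbf0^T\end{bmatrix}$. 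I need to check the hypotheses: $\mathbf{v}^T\one=\widehat\pi_1^T\one_m=1\neq0$ and $\pi^T\mathbf{u}=\|\pi_1\|\neq0$, so it applies, and it yields $\begin{bmatrix}\mathbf{k}_1^T & -\mathbf{k}_2^T\end{bmatrix}=\begin{bmatrix}\widehat\pi_1^T & -\widehat\pi_2^T\end{bmatrix}(I-P+\mathbf u\mathbf v^T)^{-1}$. Right-multiplying by $\begin{bmatrix}\mathbf0\\\one\end{bmatrix}$ extracts exactly $\mathbf{k}_2^T\one=\theta$ once I confirm the left-hand contribution of $\widehat\pi_1^T$ vanishes on that right vector — this is where the specific choice of $\mathbf v$ supported on the first block is used, so a short verification that $\begin{bmatrix}\mathbf0^T&\widehat\pi_2^T\end{bmatrix}$ is itself a valid left object (the sign flips into $\theta$ correctly) is needed. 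This is the step I expect to require the most care: matching the sign conventions of \eqref{eq:systk1k2} with the statement of \cite[Theorem~2]{kemeny} and checking that projecting onto the second block of $\one$ kills the spurious terms.

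The remaining three identities are then mechanical reductions of \eqref{formula for theta2}. For \eqref{formula for theta4} and \eqref{formula for theta3} I would compute the inverse of $I-P+\begin{bmatrix}\one\\\mathbf0\end{bmatrix}\begin{bmatrix}\widehat\pi_1^T&\mathbf0^T\end{bmatrix}$ by the block (Schur complement) formula, taking the $(2,2)$ Schur complement with respect to the $(1,1)$ block $I-P_{11}+\one\widehat\pi_1^T$: this immediately gives the $(2,2)$ block of the inverse as $\bigl(I-P_{22}-P_{21}(I-P_{11}+\one\widehat\pi_1^T)^{-1}P_{12}\bigr)^{-1}$, and since $\begin{bmatrix}\mathbf0^T&\widehat\pi_2^T\end{bmatrix}\cdots\begin{bmatrix}\mathbf0\\\one\end{bmatrix}$ only sees that $(2,2)$ block, \eqref{formula for theta3} follows at once. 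Formula \eqref{formula for theta4} is the same expression after pulling $(I-P_{22})^{-1}$ out and applying a Sherman--Morrison/Woodbury expansion to turn $(I-P_{11}+\one\widehat\pi_1^T)^{-1}$ into a correction involving $(I-P_1+\one\widehat\pi_1^T)^{-1}$ via the stochastic-complement identity $P_1=P_{11}+P_{12}(I-P_{22})^{-1}P_{21}$ — a routine but slightly lengthy manipulation. Finally \eqref{formula for theta5} rewrites \eqref{formula for theta3} by recognizing $P_{21}(I-P_{11}+\one\widehat\pi_1^T)^{-1}P_{12}$ via Sherman--Morrison with $S=(I-P_{11})^{-1}$: $(I-P_{11}+\one\widehat\pi_1^T)^{-1}=S-\frac{1}{1+\widehat\pi_1^TS\one}S\one\widehat\pi_1^TS$, so $P_{22}+P_{21}SP_{12}=P_2$ and the leftover rank-one piece is exactly $\frac{1}{1+\widehat\pi_1^TS\one}P_{21}S\one\,\widehat\pi_1^TSP_{12}$, giving \eqref{formula for theta5}. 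Each of these last steps is a one- or two-line computation once \eqref{formula for theta2} is in hand, so the whole proof hinges on getting the application of \cite[Theorem~2]{kemeny} with the asymmetric rank-one update right.
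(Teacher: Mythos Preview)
Your overall plan matches the paper's: specialize the rank-one correction in~\eqref{eq:gamma} to $\mathbf{u}=\begin{bmatrix}\one\\\mathbf{0}\end{bmatrix}$, $\mathbf{v}=\begin{bmatrix}\hat\pi_1\\\mathbf{0}\end{bmatrix}$, then pass from~\eqref{formula for theta2} to \eqref{formula for theta4}--\eqref{formula for theta5} by block Schur complements and Sherman--Morrison. Those latter reductions are correct; for~\eqref{formula for theta4} the cleanest route is simply the other block-inversion formula for the $(2,2)$ block of $\widehat Z$, taking the Schur complement of $I-P_{22}$, which is exactly $I-P_1+\one\hat\pi_1^T$ --- no Woodbury step is required.

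The gap is in your derivation of~\eqref{formula1}. The identification $\theta=\mathbf{k}_2^T\one$ is wrong, both for the $\mathbf{k}_2$ coming from Proposition~\ref{prop:appunti} (where it gives $\sigma_2-1$, and $\gamma=\|\pi_1\|(\sigma_2-1)-\|\pi_2\|$ fails unless $\sigma_1=0$) and for the $\mathbf{k}_2$ coming from your new rank-one choice. With $\widehat Z=(I-P+\mathbf{u}\mathbf{v}^T)^{-1}$ one has $\widehat Z\mathbf{u}=\one$, whence $\mathbf{k}_1^T\one=[\hat\pi_1^T,-\hat\pi_2^T]\widehat Z\mathbf{u}=0$; but $\mathbf{v}^T\widehat Z=\|\pi_1\|^{-1}\pi^T$, so
\[
[\hat\pi_1^T,\mathbf{0}^T]\,\widehat Z\begin{bmatrix}\mathbf{0}\\\one\end{bmatrix}=\frac{\|\pi_2\|}{\|\pi_1\|}\neq 0,
\]
and the $\hat\pi_1$ contribution you hoped would vanish does not. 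Consequently $-\mathbf{k}_2^T\one=\|\pi_2\|/\|\pi_1\|-\theta$ rather than $-\theta$, and only after carrying this extra term into $\gamma=\|\pi_1\|\mathbf{k}_2^T\one+\|\pi_2\|\mathbf{k}_1^T\one$ does~\eqref{formula1} emerge. The paper's route avoids the $\mathbf{k}_i$ bookkeeping altogether: decompose directly in~\eqref{eq:gamma}
\[
\begin{bmatrix}\|\pi_2\|\one\\-\|\pi_1\|\one\end{bmatrix}=\|\pi_2\|\mathbf{u}-\|\pi_1\|\begin{bmatrix}\mathbf{0}\\\one\end{bmatrix},\qquad [\hat\pi_1^T,-\hat\pi_2^T]=\mathbf{v}^T-[\mathbf{0}^T,\hat\pi_2^T],
\]
and the two identities $\widehat Z\mathbf{u}=\one$ and $\mathbf{v}^T\widehat Z=\|\pi_1\|^{-1}\pi^T$ collapse everything to~\eqref{formula1} with $\theta$ given by~\eqref{formula for theta2} in two lines.
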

\begin{proof}
From \eqref{eq:gamma}, by choosing  $\mathbf{u}^T=
\begin{bmatrix}
      \one^T & \mathbf{0}^T
\end{bmatrix}$ and $\mathbf{v}^T=\begin{bmatrix}
        \pi_1^T & \mathbf{0}^T
    \end{bmatrix} $, and by observing that 
   $\widehat Z\mathbf{u}=\one$ and $\mathbf{v}^T \widehat Z=\pi^T$, where $\widehat Z=(I-P+\mathbf{u} \mathbf{v}^T)^{-1}$,
    we arrive at \eqref{formula1}, with $\theta$ given in
    \eqref{formula for theta2}. The remaining expressions are obtained by applying formal manipulations to \eqref{formula for theta2} and properties of Schur complements.
\end{proof}
}

\section{Application of Theorem~\ref{th:kappap}}\label{sec:spec}
Here we apply the main result of the previous section to structured stochastic matrices. %

\subsection{Periodic Markov chains}

Let $P$ be the transition matrix of an ergodic Markov chain with $n$ states. The \textit{period} $p_i$ of state $i$ is the greatest common divisor of all natural numbers $m$ such that $(P^m)_{i,i}>0$. A Markov chain is called \textit{periodic} if $p_i\geq 2$ for all $1\leq i\leq n$, and it is called \textit{aperiodic} otherwise. The \textit{period} of a periodic Markov chain is the greatest common divisor of periods of all states. It is well-known that if the Markov chain is periodic, then $P$ is permutationally similar to a block-cyclic matrix (see \cite{bremaud2001markov}). 

Given a periodic Markov chain with $n$ states and period $d$, we may assume that the transition matrix $P$ is given by
\begin{equation}\label{eq:cyc}
P=\begin{bmatrix}
P_{11} & P_{12}\\
P_{21} & P_{22}
\end{bmatrix}=\left[\begin{array}{c|cccc}
0& 0 &\ldots&0&A_d\\\hline
A_1&0&\ddots&\ddots&0\\
0&A_2&\ddots&\ddots&\vdots\\
\vdots&\ddots&\ddots&0&\vdots\\
0&\ldots&0&A_{d-1}&0
\end{array}\right]
\end{equation}
where all block diagonal matrices are square and each of $A_i$'s is a rectangular stochastic matrix. Let $n_i$ be the size of $i^{\text{th}}$ block diagonal matrix of $P$ for $i=1,\dots,d$. %
A set of the states corresponding to a block diagonal matrix is called a \textit{cyclic class}. The \textit{cyclicity index} is the number of cyclic classes. One can find that each of 
\begin{equation}\label{eq:temp3}
    A_dA_{d-1}\cdots A_1,\quad A_1A_{d}\cdots A_2,\quad \dots\quad,\quad A_{d-1}\cdots A_1A_{d}
\end{equation} 
is a square stochastic matrix and its corresponding Markov chain is aperiodic. It can be seen that
\begin{equation}\label{eq:inv(I-P22)}
(I-P_{22})^{-1} = \left[\begin{array}{c|c|c|c|c}
I& 0 &                  & 0   & 0 \\
A_2& I&                 & \vdots  & \vdots \\
A_3A_2&A_3&    \cdots   &  0  & 0 \\
\vdots&\vdots&          &  I & 0 \\
A_{d-1}\cdots A_3A_2& A_{d-1}\cdots A_4A_3 &        & A_{d-1} & I
\end{array}\right].
\end{equation}
It is immediate to find that the stochastic complements $P_1$ and $P_2$ of $P_{11}$ and $P_{22}$ are, respectively,
\begin{equation}\label{eq:Pcirc}
P_1=A_dA_{d-1}\cdots A_1,\quad
P_2=\begin{bmatrix}
0&\ldots&\ldots&0&A_1A_d\\
A_2&0&\ddots&\ddots&0\\
0&A_3&\ddots&\ddots&\vdots\\
\vdots&\ddots&\ddots&0&\vdots\\
0&\ldots&0&A_{d-1}&0
\end{bmatrix}.
\end{equation}
We note that the censored Markov chain for $P_2$ is periodic.

\begin{proposition}\label{prop:3}
	Let $P$ be the transition matrix of a periodic Markov chain. Suppose that $P$ is of form \eqref{eq:cyc}. Then
	\[
	\kappa(P)=\kappa(P_1)+\kappa(P_2)+\frac12
	\]
	where $P_1$ and $P_2$ are given in \eqref{eq:Pcirc}.
\end{proposition}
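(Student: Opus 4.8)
The plan is to use Theorem~\ref{th:kappap}. By \eqref{formula0} we have $\kappa(P)=\kappa(P_1)+\kappa(P_2)+\gamma$, so it suffices to show $\gamma=\tfrac12$. In \eqref{eq:gamma} I would make the admissible choice $\mathbf u=\one$, $\mathbf v=\pi$ (legitimate since $\pi^T\one=1\neq 0$), obtaining $\gamma=\begin{bmatrix}\widehat\pi_1^T & -\widehat\pi_2^T\end{bmatrix}\mathbf w$, where $\mathbf w=(I-P+\one\pi^T)^{-1}\mathbf y$ and $\mathbf y^T=\begin{bmatrix}\|\pi_2\|\one^T & -\|\pi_1\|\one^T\end{bmatrix}$. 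Since $\pi^T\mathbf y=\|\pi_1\|\,\|\pi_2\|-\|\pi_2\|\,\|\pi_1\|=0$, multiplying $(I-P+\one\pi^T)\mathbf w=\mathbf y$ by $\pi^T$ gives $\pi^T\mathbf w=0$ and hence $(I-P)\mathbf w=\mathbf y$; conversely these two conditions determine $\mathbf w$ uniquely (the system $(I-P)\mathbf w=\mathbf y$ is consistent because $\pi^T\mathbf y=0$, and its solutions differ by multiples of $\one$). So I only need to exhibit that unique $\mathbf w$ and evaluate $\begin{bmatrix}\widehat\pi_1^T & -\widehat\pi_2^T\end{bmatrix}\mathbf w$.

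A preliminary observation is that, for a periodic chain written in the block form \eqref{eq:cyc}, the stationary distribution places mass exactly $1/d$ on each cyclic class. Indeed, partitioning $\pi^T=\begin{bmatrix}\pi_{(1)}^T & \cdots & \pi_{(d)}^T\end{bmatrix}$ along the cyclic classes, the identity $\pi^TP=\pi^T$ couples consecutive blocks by $\pi_{(i)}^T=\pi_{(i+1)}^TA_i$ (cyclically, with $\pi_{(d)}^T=\pi_{(1)}^TA_d$); taking $1$-norms and using $A_i\one=\one$ yields $\|\pi_{(i)}\|=\|\pi_{(i+1)}\|$ for all $i$, hence $\|\pi_{(i)}\|=1/d$. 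In particular $\|\pi_1\|=1/d$ and $\|\pi_2\|=(d-1)/d$.

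Next I would solve $(I-P)\mathbf w=\mathbf y$ blockwise. Partitioning $\mathbf w=(\mathbf w^{(1)},\dots,\mathbf w^{(d)})$ along the cyclic classes (so $\mathbf w^{(1)}$ matches the $P_{11}$-block and $(\mathbf w^{(2)},\dots,\mathbf w^{(d)})$ the $P_{22}$-block), the block equations read $\mathbf w^{(1)}-A_d\mathbf w^{(d)}=\|\pi_2\|\one$ and $\mathbf w^{(i)}-A_{i-1}\mathbf w^{(i-1)}=-\|\pi_1\|\one$ for $i=2,\dots,d$. Telescoping the latter $d-1$ equations and using $A_j\one=\one$ gives $\mathbf w^{(i)}=A_{i-1}A_{i-2}\cdots A_1\mathbf w^{(1)}-(i-1)\|\pi_1\|\one$; substituting $i=d$ into the first equation and recalling $P_1=A_dA_{d-1}\cdots A_1$ yields $(I-P_1)\mathbf w^{(1)}=\bigl(\|\pi_2\|-(d-1)\|\pi_1\|\bigr)\one$, whose right-hand side vanishes by the second paragraph. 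Hence $\mathbf w^{(1)}=t\one$ for a scalar $t$, and so $\mathbf w^{(i)}=\bigl(t-\tfrac{i-1}{d}\bigr)\one$ for every $i$. Imposing $\pi^T\mathbf w=\sum_{i=1}^d\|\pi_{(i)}\|\bigl(t-\tfrac{i-1}{d}\bigr)=0$ gives $t=\tfrac{d-1}{2d}$, so $\mathbf w^{(i)}=\tfrac{d+1-2i}{2d}\one$.

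Finally, since $\widehat\pi_1^T\one=1$, $\|\pi_{(i)}\|=1/d$, and $\widehat\pi_2^T=\|\pi_2\|^{-1}\begin{bmatrix}\pi_{(2)}^T & \cdots & \pi_{(d)}^T\end{bmatrix}$, the quantity $\gamma=\widehat\pi_1^T\mathbf w^{(1)}-\sum_{i=2}^d\|\pi_2\|^{-1}\pi_{(i)}^T\mathbf w^{(i)}$ collapses to $\tfrac{d-1}{2d}-\tfrac{1}{d-1}\sum_{i=2}^d\tfrac{d+1-2i}{2d}$; using $\sum_{i=1}^d(d+1-2i)=0$, hence $\sum_{i=2}^d(d+1-2i)=-(d-1)$, this equals $\tfrac{d-1}{2d}+\tfrac{1}{2d}=\tfrac12$, as required. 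I expect the only genuinely delicate point to be the identity $(I-P_1)\mathbf w^{(1)}=\mathbf 0$: it is precisely here that the equal-mass relation $\|\pi_2\|=(d-1)\|\pi_1\|$ is used to annihilate the right-hand side and force $\mathbf w^{(1)}$ to be a multiple of $\one$; the remaining steps are routine bookkeeping with the telescoped identities $A_{i-1}\cdots A_1\one=\one$.
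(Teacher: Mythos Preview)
Your proof is correct, but it follows a different route from the paper's. The paper computes $\gamma$ via the auxiliary quantity $\theta$ from \eqref{formula1}--\eqref{formula for theta4}: it splits $\theta=\theta_1+\theta_2$ with $\theta_1=\hat\pi_2^T(I-P_{22})^{-1}\one$ and $\theta_2=\hat\pi_2^T(I-P_{22})^{-1}P_{21}(I-P_1+\one\hat\pi_1^T)^{-1}P_{12}(I-P_{22})^{-1}\one$, then invokes the explicit lower-triangular form \eqref{eq:inv(I-P22)} of $(I-P_{22})^{-1}$ to obtain $\theta_1=d/2$, $\theta_2=d-1$, hence $\theta=\tfrac32 d-1$ and $\gamma=\|\pi_1\|\theta-\|\pi_2\|=\tfrac12$. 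You instead work directly with the expression \eqref{eq:gamma} for $\gamma$ (with $\mathbf u=\one$, $\mathbf v=\pi$), reduce it to solving $(I-P)\mathbf w=\mathbf y$ under the normalization $\pi^T\mathbf w=0$, and exploit the block-cyclic structure to telescope the system down to $(I-P_1)\mathbf w^{(1)}=\mathbf 0$, which forces each block of $\mathbf w$ to be a scalar multiple of $\one$. This bypasses the $\theta$ machinery and the explicit inverse of $I-P_{22}$ entirely; the trade-off is that the paper's approach, while heavier on bookkeeping here, reuses formulas that are also the basis of the algorithm in Section~\ref{sec:algo}, whereas your argument is more self-contained and makes the role of the equal-mass relation $\|\pi_{(i)}\|=1/d$ very transparent.
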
 
\begin{proof}
	It suffices to show that $\gamma$ in \eqref{formula0} is $\frac{1}{2}$. In order to find $\gamma$, we shall find the value of $\theta$ introduced in Equation \eqref{formula1}. From \eqref{formula for theta2}, we may look at $\theta$ as the sum of two terms $\theta_1$ and $\theta_2$, where
	\[
	\begin{split}
	& \theta_1=\hat\pi_2^T(I-P_{22})^{-1}\one,\\
	& \theta_2=\hat\pi_2^T(I-P_{22})^{-1}P_{21}(I-P_1+\one \hat\pi_1^T)^{-1}P_{12}(I-P_{22})^{-1}\one.
	\end{split}
	\]
	
	We first find expressions of $\hat\pi_2^T$ and $(I-P_{22})^{-1}\one$. Let $$\pi^T=[\bs_1^T,\bs_2^T,\ldots,\bs_d^T]$$ 
	be conformally partitioned with $P$ and be the stationary distribution vector. From the condition $\pi^TP=\pi^T$, we have 
	$\bs_{i}^T=\bs_{i+1}^TA_i$, $i=1,\ldots,d-1$, and $\bs_d^T=\bs_1^TA_d$. Then 
	$$\bs_{2}^T=\bs_{1}^TA_dA_{d-1}\cdots A_2,\quad \bs_{3}^T=\bs_{1}^TA_dA_{d-1}\cdots A_3,\quad \dots\quad,\quad \bs_{d-1}^T=\bs_{1}^TA_dA_{d-1}.$$
	Moreover, since $A_i\one=\one$, it follows that $\bs_i^T\one=\bs_{i+1}^T\one$ and $\bs_d^T\one=\bs_{1}^T\one$. Hence, $\|\bs_j\|=\frac 1d$ for $1\leq j\leq d$. Note that $\pi_1 = \bs_1$. Therefore,
	$$\pi^T=\frac1d \begin{bmatrix}
	\hat\pi_1^T & \hat\pi_1^T(A_{d}A_{d-1}\cdots A_2) & \hat\pi_1^T(A_dA_{d-1}\cdots A_3) & \cdots & \hat\pi_1^TA_{d}
	\end{bmatrix},$$
	whence 
	\begin{equation}\label{eq:pi2}
	\hat\pi_2^T=\frac1{d-1}\begin{bmatrix}
	\hat\pi_1^T(A_dA_{d-1}\cdots A_2) &
	\hat\pi_1^T(A_dA_{d-1}\cdots A_3) & \cdots & \hat\pi_1^TA_{d}
	\end{bmatrix}.
	\end{equation}
	Observing $(I-P_{22})^{-1}$ in \eqref{eq:inv(I-P22)}, we have
	\begin{equation}\label{eq:imp2}
	\begin{aligned}
	&(I-P_{22})^{-1}\one = \begin{bmatrix}
	\one^T & 2\one^T & \cdots & (d-1)\one^T
	\end{bmatrix}^T,\\ &P_{12}(I-P_{22})^{-1}\one =(d-1)\one,\\
	&(I-P_{22})^{-1}P_{21}\one=\one.
	\end{aligned}
	\end{equation}
	
	By using the expression of $\hat\pi_2$ in \eqref{eq:pi2} and the expression of $(I-P_{22})^{-1}\one$ in \eqref{eq:imp2} together with the property $A_i\one=\one$,
	we find that
	\[
	\theta_1=\frac1{d-1}\sum_{i=1}^{d-1}i\hat\pi_1^T\one=\frac12 d.
	\]
	Note that since $(I-P_1+\one\hat\pi_1^T)\one = \one$, we have $(I-P_1+\one\hat\pi_1^T)^{-1}\one = \one$. Concerning $\theta_2$, we find from \eqref{eq:imp2} that 
	\[\begin{aligned}
	\theta_2&=\hat\pi_2^T(I-P_{22})^{-1}P_{21}(I-P_1+\one \hat\pi_1^T)^{-1}P_{12}(I-P_{22})^{-1}\one=d-1.
	\end{aligned}
	\]
	Thus, $\theta=\theta_1+\theta_2=\frac32 d-1$ and therefore $\gamma = \|\pi_1\|\theta-\|\pi_2\|=\frac12$.
\end{proof}

\begin{theorem}\label{prop:above}
	Let $P$ be the transition matrix of a periodic Markov chain. Suppose that $P$ is of form \eqref{eq:cyc}. Then
	\begin{equation}\label{eq:nonsquare}
	\kappa(P) = d\kappa(A_dA_{d-1}\cdots A_1)+n-dn_1+\frac{d-1}{2}.
	\end{equation}
	Moreover, if $n_1=\cdots = n_d$ then
	\begin{equation}\label{eq:square}
	\kappa(P)=d\kappa(A_dA_{d-1}\cdots A_1)+\frac{d-1}2.
	\end{equation}
\end{theorem}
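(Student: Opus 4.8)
The plan is to prove \eqref{eq:nonsquare} by induction on the number of cyclic classes $d$, peeling off one class at a time via Proposition~\ref{prop:3}; the bound \eqref{eq:square} is then immediate, since $n_1=\cdots=n_d$ forces $n=dn_1$, so the term $n-dn_1$ vanishes. The base case $d=1$ is vacuous: then $P=A_1$ is a single square stochastic matrix, $P_1=A_1$, and \eqref{eq:nonsquare} reads $\kappa(P)=\kappa(A_1)$, which holds because $n=n_1$ kills the last two terms.

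For the inductive step, fix $d\ge 2$ and assume the formula for $d-1$. The first thing I would record is the precise shape of the two stochastic complements in \eqref{eq:Pcirc}: $P_1=A_dA_{d-1}\cdots A_1$ is $n_1\times n_1$, while $P_2$ is again of the block-cyclic form \eqref{eq:cyc}, now with $d-1$ cyclic classes of sizes $n_2,n_3,\dots,n_d$ (so of total size $n-n_1$), whose successive off-diagonal blocks are $A_2,A_3,\dots,A_{d-1},A_1A_d$; in particular its ``around-the-cycle'' product is $Q:=A_1A_dA_{d-1}\cdots A_2$, an $n_2\times n_2$ matrix which is irreducible (a stochastic complement of $P_2$) and aperiodic (it is one of the matrices listed in \eqref{eq:temp3}). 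Proposition~\ref{prop:3} gives $\kappa(P)=\kappa(P_1)+\kappa(P_2)+\tfrac12$, and the inductive hypothesis applied to $P_2$ gives
\[
\kappa(P_2)=(d-1)\kappa(Q)+(n-n_1)-(d-1)n_2+\tfrac{d-2}{2}.
\]
It then remains to trade $\kappa(Q)$ for $\kappa(P_1)$. Writing $M=A_dA_{d-1}\cdots A_2$ (size $n_1\times n_2$, stochastic) and $N=A_1$ (size $n_2\times n_1$, stochastic), we have $P_1=MN$ and $Q=NM$, both irreducible and aperiodic, so \eqref{eq:kemprod} and \eqref{eq:kemdiff} apply and yield — whether $n_1<n_2$, $n_1=n_2$, or $n_1>n_2$ — the single identity $\kappa(P_1)=\kappa(Q)+n_1-n_2$. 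Substituting $\kappa(Q)=\kappa(P_1)-n_1+n_2$ into the display for $\kappa(P_2)$ and then into $\kappa(P)=\kappa(P_1)+\kappa(P_2)+\tfrac12$, a short simplification collapses the $n_2$-terms and produces $\kappa(P)=d\kappa(P_1)+n-dn_1+\tfrac{d-1}{2}$, which is \eqref{eq:nonsquare}; \eqref{eq:square} follows at once.

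The only genuinely non-routine step — everything else is bookkeeping — is the correct identification of $P_2$ as a block-cyclic chain with $d-1$ classes, with the right class sizes $n_2,\dots,n_d$ and the right around-the-cycle product $Q=A_1A_dA_{d-1}\cdots A_2$, so that the inductive hypothesis can be invoked with the proper parameters; and, coupled with this, verifying that both $P_1=MN$ and $Q=NM$ are irreducible and aperiodic, which is what legitimizes the use of \eqref{eq:kemprod}/\eqref{eq:kemdiff} to obtain $\kappa(P_1)=\kappa(Q)+n_1-n_2$ with the correct sign. Once those structural facts are in hand, the computation is a one-line algebraic reduction.
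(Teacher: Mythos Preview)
Your proof is correct and follows essentially the same strategy as the paper's: recursive application of Proposition~\ref{prop:3} to peel off cyclic classes one at a time, combined with the identity $\kappa(MN)-\kappa(NM)=n_1-n_2$ from \eqref{eq:kemprod}/\eqref{eq:kemdiff} to convert the ``around-the-cycle'' product at each stage back to $\kappa(P_1)$. The only cosmetic difference is that the paper unrolls the recursion fully---writing $\kappa(P)$ as the sum of all $d$ cyclic-product Kemeny constants plus $\tfrac{d-1}{2}$---and then telescopes using \eqref{eq:kemdiff}, whereas you package the same computation as a formal induction on $d$; your $d=1$ anchor is a harmless formality (the theorem itself concerns $d\ge 2$), and you could equally well start the induction at $d=2$ via Example~\ref{cor:kemeny_bipartite}.
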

\begin{proof}
	Consider $P_2$ in \eqref{eq:Pcirc} with $d\geq 3$. Note that the Markov chain corresponding to $P_2$ is periodic. Applying Proposition~\ref{prop:3}, we find that
	$$\kappa (P_2) = \kappa(A_1A_dA_{d-1}\cdots A_2)+\kappa(P_3)+\frac{1}{2},$$
	where
	$$
	P_3 = \left[\begin{array}{c|ccc}
	0&\ldots&0&A_2A_1A_d\\\hline
	A_3&\ddots&\ddots&0\\
	&\ddots&\ddots&\vdots\\
	0& &A_{d-1}&0
	\end{array}\right].
	$$
	Hence, $\kappa(P)=\kappa(A_dA_{d-1}\cdots A_1)+\kappa(A_1A_dA_{d-1}\cdots A_2)+\kappa(P_3)+1$. If $d\geq 4$, one can apply the proposition to $P_3$ with the partition. In this manner, recursively applying the proposition, we obtain 
	\begin{equation*}
	\begin{aligned}
	\kappa(P)
	&=\kappa(A_dA_{d-1}\cdots A_1)+
	\kappa(A_1A_dA_{d-1}\cdots A_2)
	+\cdots\\
	&+
	\kappa(A_{d-1}A_{d-2}\cdots A_1A_d)+\frac{d-1}2.
	\end{aligned}
	\end{equation*}
	
	Recall that $A_i$ is of size $n_{i+1}\times n_i$ for $1\leq i\leq d-1$ and $A_d$ is of size $n_1\times n_d$. Note that $A_{i-1}A_{k-2}\cdots A_1A_dA_{d-1}\cdots A_i$ is of size $n_i\times n_i$ for $1\leq i\leq d$. From \eqref{eq:kemdiff}, we obtain 
	\begin{align*}
	\kappa(A_{k-1}A_{k-2}\cdots A_1A_dA_{d-1}\cdots A_{k})
	&=\kappa(A_{k-2}A_{k-3}\cdots A_1A_dA_{d-1}\cdots A_{k-1})\\
	&+ n_{k}-n_{k-1}
	\end{align*}
	for $2\leq k\leq d$. It follows that
	\begin{align*}
	\kappa(A_d & A_{d-1}\cdots A_1)+\cdots+\kappa(A_{d-2}\cdots A_1 A_dA_{d-1})+\kappa(A_{d-1}\cdots A_1A_d)\\
	&=\;d\kappa(A_dA_{d-1}\cdots A_1)+(d-1)(n_2-n_1)+\cdots\\
	&+\; 2(n_{d-1}-n_{d-2})+n_d-n_{d-1}
	=\;d\kappa(A_dA_{d-1}\cdots A_1)+n-dn_1.\qedhere
	\end{align*}
\end{proof}

\begin{remark}\rm
The characteristic polynomials of the matrices $P$ in \eqref{eq:cyc} and $P_1=A_dA_{d-1}\cdots A_1$ are related by the equation 
$\det (\lambda I-P)=\lambda^{\ell}\det (\lambda^d I - P_1)$, where $\ell=n - d  n_1$.
 Therefore, from the computational point of view, if the eigenvalues of $P_1$ are explicitly known, then also the eigenvalues of $P$ are explicitly known and we can recover $\kappa(P)$ 
	 from \eqref{eq:kemeig}.
	On the other hand,  if the eigenvalues of $P_1$ are not known, but the value of $\kappa(P_1)$ is available, then Kemeny's constant of $P$ can be directly obtained from \eqref{eq:nonsquare}.
\end{remark}

\begin{example}\label{cor:kemeny_bipartite}
	A collection of random walks on undirected graphs is one of the most accessible families of Markov chains. If a random walk is periodic, then the underlying graph is necessarily bipartite. Let $P$ be an irreducible stochastic matrix with the structure
	\[
	P=\begin{bmatrix}
	0&P_{12}\\P_{21}&0
	\end{bmatrix}
	\]
	where $P_{12}$ and $P_{21}$ have size $n_1\times n_2$ and $n_2\times n_1$, respectively.
	Then
	\begin{equation}\label{eq:app1}
	\kappa(P)=2\kappa(P_1)-n_1+n_2+\frac12,
	\end{equation}
	where $P_1=P_{12}P_{21}$.
\end{example}

\begin{example}
	Consider the transition matrix $P$ in \eqref{eq:cyc}. Suppose that for $i=1,\dots,d$, $A_i$ is the matrix with all entries equal to $\frac1{n_i}$. Then $A_dA_{d-1}\cdots A_1$ 
	is the $n_1\times n_1$ matrix with all entries equal to $\frac1{n_1}$. Since $\kappa(A_dA_{d-1}\cdots A_1)=n_1-1$, we have
	\begin{equation*}
	\kappa(P)= n -\frac{d+1}2.
	\end{equation*}
\end{example}

Now we provide a lower bound on Kemeny's constant of a periodic Markov chain.

\begin{corollary}\label{cor:lower bound of periodic}
	Let $P$ be the transition matrix of a periodic Markov chain. Suppose that $P=[p_{xy}]$ is of form \eqref{eq:cyc} with $n_1\leq n_i$ for $1\leq i\leq d$. Then
	\begin{equation*}
	\kappa(P) \geq n-\frac{dn_1+1}{2},
	\end{equation*}
	with equality when $p_{xy}$ is given as in \eqref{eq:p_xy}.
\end{corollary}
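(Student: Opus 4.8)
The plan is to combine Theorem~\ref{prop:above} with a lower bound on $\kappa(A_dA_{d-1}\cdots A_1)$, since \eqref{eq:nonsquare} already expresses $\kappa(P)$ in terms of this quantity plus the explicit term $n-dn_1+\frac{d-1}{2}$. First I would recall that $A_dA_{d-1}\cdots A_1$ is an $n_1\times n_1$ stochastic matrix (this is noted around \eqref{eq:temp3}), and that for \emph{any} $k\times k$ stochastic matrix $Q$ one has $\kappa(Q)\ge k-1$, with equality precisely when $Q=\frac1k\one\one^T$. This follows from \eqref{eq:kemeig}: writing $\kappa(Q)=\sum_{i=2}^k \frac{1}{1-\lambda_i}$ and using that the nontrivial eigenvalues of a stochastic matrix lie in the closed unit disk, each term $\mathrm{Re}\,\frac1{1-\lambda_i}\ge \frac12$, so $\kappa(Q)\ge \frac{k-1}{2}$; the sharper bound $\kappa(Q)\ge k-1$ is the classical fact (see e.g.\ the extremal references \cite{breen19,ciardo,kim23}) that the complete-graph/rank-one matrix minimizes Kemeny's constant among stochastic matrices of fixed size, and I would invoke it directly. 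Applying this with $k=n_1$ gives $\kappa(A_dA_{d-1}\cdots A_1)\ge n_1-1$.

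Substituting into \eqref{eq:nonsquare} yields
\[
\kappa(P)\;=\;d\,\kappa(A_dA_{d-1}\cdots A_1)+n-dn_1+\frac{d-1}{2}\;\ge\;d(n_1-1)+n-dn_1+\frac{d-1}{2}\;=\;n-d+\frac{d-1}{2}\;=\;n-\frac{d+1}{2}.
\]
Wait -- this gives $n-\frac{d+1}{2}$, not $n-\frac{dn_1+1}{2}$; the two agree only when $n_1=1$. The correct route must instead keep $\kappa(P_1)$ bounded below not by $n_1-1$ but trivially by $0$ (Kemeny's constant is nonnegative), or rather: reexamine. Using $\kappa(A_dA_{d-1}\cdots A_1)\ge 0$ in \eqref{eq:nonsquare} gives $\kappa(P)\ge n-dn_1+\frac{d-1}{2}$, which is weaker than the claim. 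The claimed bound $n-\frac{dn_1+1}{2}=n-dn_1+\frac{dn_1-1}{2}$ sits between these, so the argument must use $\kappa(A_dA_{d-1}\cdots A_1)\ge \frac{n_1-1}{2}$ -- exactly the weak bound from the real-part estimate above. I would therefore prove: for any $k\times k$ stochastic matrix $Q$, $\kappa(Q)\ge\frac{k-1}{2}$, via $\mathrm{Re}\frac{1}{1-\lambda}\ge\frac12$ for $|\lambda|\le1$, $\lambda\ne1$ (since $|1-\lambda|^2=1-2\mathrm{Re}\,\lambda+|\lambda|^2\le 2(1-\mathrm{Re}\,\lambda)=2\,\mathrm{Re}(1-\lambda)$), and then substitute into \eqref{eq:nonsquare}:
\[
\kappa(P)\;\ge\; d\cdot\frac{n_1-1}{2}+n-dn_1+\frac{d-1}{2}\;=\;n-\frac{dn_1+1}{2}.
\]

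For the equality case, equality in $\kappa(Q)\ge\frac{k-1}{2}$ forces every nontrivial eigenvalue of $Q=A_dA_{d-1}\cdots A_1$ to satisfy $\mathrm{Re}\,\lambda=-1$ together with $|\lambda|=1$, i.e.\ $\lambda=-1$, so $Q$ has spectrum $\{1,-1,\dots,-1\}$; one then checks this is realized exactly by the doubly-stochastic-type choice of the $A_i$ indicated as \eqref{eq:p_xy} in the statement (the reference \eqref{eq:p_xy} is presumably the explicit $p_{xy}$ defined nearby), and conversely that this choice yields $A_dA_{d-1}\cdots A_1$ with $\kappa$ equal to $\frac{n_1-1}{2}$. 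The main obstacle is pinning down that the eigenvalue condition together with stochasticity and the constraint $n_1\le n_i$ is met \emph{only} by the configuration \eqref{eq:p_xy}; I would handle this by noting that $\kappa(A_dA_{d-1}\cdots A_1)=\frac{n_1-1}{2}$ means all $n_1-1$ nontrivial eigenvalues equal $-1$, hence $A_dA_{d-1}\cdots A_1$ is similar to $\mathrm{diag}(1,-1,\dots,-1)$, and then verifying directly from \eqref{eq:p_xy} that this holds, which is a short computation with the rank-one-type blocks.
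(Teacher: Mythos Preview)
Your inequality argument is correct and matches the paper's: substitute the bound $\kappa(Q)\ge\frac{k-1}{2}$ for a $k\times k$ irreducible stochastic matrix into \eqref{eq:nonsquare}. The paper cites this bound from \cite{kirkland2010fastest}; your direct proof via $\mathrm{Re}\,\frac{1}{1-\lambda}\ge\frac12$ for $|\lambda|\le 1$, $\lambda\ne 1$, is fine and self-contained. (The initial detour through ``$\kappa(Q)\ge k-1$'' is simply false as a universal lower bound and should be cut.)

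The equality analysis, however, contains a genuine error. Equality in $\mathrm{Re}\,\frac{1}{1-\lambda}\ge\frac12$ holds precisely when $|\lambda|=1$, not when $\lambda=-1$: your own computation shows the inequality $|1-\lambda|^2\le 2(1-\mathrm{Re}\,\lambda)$ is tight exactly when $|\lambda|^2=1$. So equality in $\kappa(Q)\ge\frac{n_1-1}{2}$ forces all nontrivial eigenvalues of $Q=A_dA_{d-1}\cdots A_1$ onto the unit circle, not to the single point $-1$; the spectrum $\{1,-1,\dots,-1\}$ is not what arises. The cited result in the paper states that equality holds if and only if $Q$ is the adjacency matrix of a directed $n_1$-cycle (spectrum the full set of $n_1$th roots of unity), and the explicit construction \eqref{eq:p_xy} is designed exactly so that $A_dA_{d-1}\cdots A_1$ becomes that cyclic permutation matrix.

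Finally, you are over-reading the statement: it says ``with equality when $p_{xy}$ is given as in \eqref{eq:p_xy}'', not ``if and only if''. You do not need to characterize all extremal $P$; you only need to exhibit one, namely verify that the specific $p_{xy}$ in \eqref{eq:p_xy} yields $A_dA_{d-1}\cdots A_1$ equal to the directed $n_1$-cycle, hence $\kappa(A_dA_{d-1}\cdots A_1)=\frac{n_1-1}{2}$. The paper does exactly this: it partitions each cyclic class into $n_1$ subsets (using $n_1\le n_i$) and routes the transitions so that the product $A_dA_{d-1}\cdots A_1$ cyclically permutes the $n_1$ singletons of the first class.
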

\begin{proof}
	From \eqref{eq:nonsquare}, $\kappa(P) = d\kappa(A_dA_{d-1}\cdots A_1)+n-dn_1+\frac{d-1}{2}$. It is known in \cite[Remark~2.14]{kirkland2010fastest} that given an $m\times m$ irreducible stochastic matrix $P$, $\kappa(P)\geq \frac{m-1}{2}$ with equality if and only if $P$ is the adjacency matrix of a directed $m$-cycle. Hence, it is enough to find a periodic Markov chain such that $A_dA_{d-1}\cdots A_1$ is the adjacency matrix of a directed $n_1$-cycle. 
	
	Note that $n_1\leq n_i$ for $1\leq i\leq d$. We may suppose that for $j = 1,\dots,d$, the cyclic class corresponding to $j^{\text{th}}$ block diagonal matrix of $P=[p_{xy}]$ is partitioned into $n_1$ subsets $C_1^j,\dots,C_{n_1}^j$. Let $p_{xy}$ be given as follows:
	\begin{align}\label{eq:p_xy}
	\begin{split}
	p_{xy} = \begin{cases}
	\frac{1}{|C_{\ell}^d|} , & \text{if $x\in C_\ell^1$ and $y\in C_\ell^d$ for $1\leq \ell\leq n_1$;}\\
	\frac{1}{|C_{\ell}^{j-1}|} , & \text{if $x\in C_\ell^j$ and $y\in C_\ell^{j-1}$ for $1\leq \ell\leq n_1$ and $3\leq j\leq d$;}\\
	1 , & \text{if $x\in C_\ell^2$ and $y\in C_{\ell-1}^1$ for $2\leq \ell\leq n_1$, or $x\in C_1^2$ and $y\in C_{n_1}^1$;}\\
	0 , & \text{otherwise.}\\
	\end{cases}
	\end{split}
	\end{align}
	It can be seen that $A_dA_{d-1}\cdots A_1$ is the adjacency matrix of a directed $n_1$-cycle, which completes the proof.
\end{proof}

%
%
%

\subsection{Kronecker product of stochastic matrices}

Given stochastic matrices $A$ and $B$, $A\otimes B$ is also stochastic where $\otimes$ denotes the Kronecker product. We will provide an expression of Kemeny's constant of $A\otimes B$ after applying Theorem~\ref{th:kappap}.

Partition $P=A\otimes B$ as follows:
\begin{equation}\label{eq:temp1}
P =\begin{bmatrix}
P_{11} & P_{12}\\
P_{21} & P_{22}
\end{bmatrix}= \left[\begin{array}{c|ccc}
a_{11}B & a_{12}B & \dots & a_{1n}B\\\hline
a_{21}B & & & \\
\vdots & & A_1 \otimes B & \\
a_{n1}B & & & 
\end{array}\right].
\end{equation}
We use $\mathbf{e}_i$ to denote the vector with a $1$ in the $i\text{th}$ entry and zeros elsewhere.
We have the following
\begin{proposition}
Let $A=[a_{ij}]$ and $B$ be stochastic matrices with stationary distribution vectors $\mathbf{x}=(x_i)$ and $\mathbf{y}$, respectively. Let $P=A\otimes B$. Then
	\[
	\kappa(P)=\kappa(P_1)+\kappa(P_2)+\frac1{1-x_1}(\mathbf{e}_1^T(I-A+\one\mathbf{x}^T)^{-1}\mathbf{e}_1-x_1),
	\]
	where $P_1$ and $P_2$ are the stochastic complements in \eqref{eq:temp1} of $P_{11}$ and $P_{22}$, respectively. 
\end{proposition}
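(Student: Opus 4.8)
The plan is to apply Theorem~\ref{th:kappap} to the block partition \eqref{eq:temp1} of $P=A\otimes B$ and evaluate $\gamma$ using the representation \eqref{eq:gamma} with a convenient choice of $\mathbf{u}$ and $\mathbf{v}$. The first step is to identify the relevant stationary quantities. Since the stationary distribution of $A\otimes B$ is $\mathbf{x}\otimes\mathbf{y}$, the partition \eqref{eq:temp1} with $P_{11}=a_{11}B$ of size $(\text{size of }B)\times(\text{size of }B)$ gives $\pi_1 = x_1(\mathbf{x}\otimes\mathbf{y})\text{-block}$; more precisely $\|\pi_1\| = x_1$, $\|\pi_2\| = 1-x_1$, $\hat\pi_1 = \mathbf{y}$, and $\hat\pi_2$ is the normalized tail of $\mathbf{x}\otimes\mathbf{y}$. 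These follow because $\pi^T = (\mathbf{x}\otimes\mathbf{y})^T = [x_1\mathbf{y}^T, x_2\mathbf{y}^T,\dots,x_n\mathbf{y}^T]$, so the first block is exactly $x_1\mathbf{y}^T$ and the remaining blocks sum to $(1-x_1)$.

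The second step is to choose $\mathbf{u}$ and $\mathbf{v}$ in \eqref{eq:gamma} so that the $n_B\cdot n$-dimensional inverse collapses to an $n$-dimensional one. The natural choice is $\mathbf{u} = \one\otimes\one = \one$ and $\mathbf{v} = \mathbf{x}\otimes\mathbf{y}^{\,?}$ — but a cleaner route is $\mathbf{v}^T = \mathbf{x}^T\otimes\mathbf{y}^T/(\dots)$; in fact since we are free, I would try $\mathbf{u}=\one$, $\mathbf{v}=\pi$ and use the Kronecker structure $I - A\otimes B + (\one\otimes\one)(\mathbf{x}\otimes\mathbf{y})^T$. The key observation is that the two boundary vectors appearing in \eqref{eq:gamma}, namely $[\|\pi_2\|\one;-\|\pi_1\|\one] = [(1-x_1)\one;-x_1\one]$ and the row $[\hat\pi_1^T,-\hat\pi_2^T]$, are both of the form (something on the $B$-factor) $\otimes$ (something structured on the $A$-factor). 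Specifically, writing the $A$-side pattern vectors, the right vector equals $\big((\mathbf{e}_1 - x_1\one)\otimes\one\big)$-type combination and the left row likewise factors through $\mathbf{y}^T$ on the $B$-side. Exploiting $\mathbf{y}^T(I-B+\one\mathbf{y}^T)^{-1}=\mathbf{y}^T$ and $(I-B+\dots)^{-1}\one=\one$, the $B$-factor integrates out, leaving only an expression in $A$, $\mathbf{x}$, and $\mathbf{e}_1$.

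The third step is the explicit reduction: after the $B$-factor cancels, $\gamma$ should become $\tfrac{1}{1-x_1}\big(\mathbf{e}_1^T(I-A+\one\mathbf{x}^T)^{-1}\mathbf{e}_1 - x_1\big)$, matching the claim. To get there I would use the mixed-product rule $(A\otimes B)(C\otimes D) = (AC)\otimes(BD)$ on the Neumann-series expansion $(I-P+\one\pi^T)^{-1}\big[(1-x_1)\one;-x_1\one\big] = \sum_{j\ge1}P^j[\cdots]$ exactly as the paper does right after Theorem~\ref{th:kappap}, observing $P^j = A^j\otimes B^j$, then $B^j\one = \one$ makes every $B$-block trivial, and the sum reduces to $\sum_{j\ge1}$ of a quadratic form in $A^j$. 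Re-summing via $(I-A+\one\mathbf{x}^T)^{-1} = I+\sum_{j\ge1}(A^j-\one\mathbf{x}^T)$ and using $\mathbf{x}^T\one=1$ together with $\hat\pi_1^T=\mathbf{y}^T$ acting as identity on the $B$-side yields the stated formula. I would also double-check that $\mathbf{e}_1$ here is the length-$n$ vector (on the $A$-side), consistent with the proposition statement.

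The main obstacle I anticipate is purely bookkeeping: correctly tracking which $\mathbf{e}_1$, $\one$, and identity live on the $A$-factor versus the $B$-factor, and confirming that $\hat\pi_2$ — the awkward normalized tail of $\mathbf{x}\otimes\mathbf{y}$ — genuinely drops out rather than contributing a residual term. This is where the choice of $\mathbf{u},\mathbf{v}$ matters: a poor choice leaves $\hat\pi_2$ entangled. The cleanest fix is to mimic the post-Theorem computation in the paper verbatim (the $-\lambda_2^{(j)}$ identity), since there $\hat\pi_2^T P_{22}^{(j)}\one$ is handled via $P^j\one=\one$ without ever needing a closed form for $\hat\pi_2$; with $P^j=A^j\otimes B^j$ the $2\times2$ aggregated matrix becomes explicitly computable in terms of $A^j$ alone, and its non-unit eigenvalue is readily identified, after which summing over $j$ finishes the proof.
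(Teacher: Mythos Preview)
Your setup is exactly the paper's: apply Theorem~\ref{th:kappap} with $\mathbf{u}=\one$, $\mathbf{v}=\pi$, identify $\pi=\mathbf{x}\otimes\mathbf{y}$ so that $\|\pi_1\|=x_1$, and recognize that both boundary vectors in \eqref{eq:gamma} have Kronecker form $(\mathbf{e}_1-x_1\one_n)\otimes\one_m$ and $\tfrac{1}{1-x_1}(\mathbf{e}_1-\mathbf{x})^T\otimes\mathbf{y}^T$. The difference lies only in how you invert. You propose expanding $(I-P+\one\pi^T)^{-1}$ as the Neumann series $I+\sum_{j\ge1}(P^j-\one\pi^T)$, using $P^j=A^j\otimes B^j$ and $B^j\one=\one$, $\mathbf{y}^TB^j\one=1$ to collapse the $B$-factor, and then re-summing on the $A$-side. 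The paper instead writes the right vector as $-x_1\one+(\mathbf{e}_1\otimes\one_m)$ and verifies directly that $(I-P+\one\pi^T)(\mathbf{z}\otimes\one_m)=\mathbf{e}_1\otimes\one_m$ for $\mathbf{z}=(I-A+\one_n\mathbf{x}^T)^{-1}\mathbf{e}_1$, via the mixed-product rule; the rest is the same quadratic-form computation $\tfrac{1}{1-x_1}(\mathbf{e}_1-\mathbf{x})^T\mathbf{z}$ that you also arrive at.

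Your route is perfectly sound algebraically and arguably more discoverable, but it carries one caveat the paper's does not: the Neumann series $\sum_j(P^j-\one\pi^T)$ converges only when $P=A\otimes B$ is primitive, which the proposition does not assume (and which can fail even for irreducible $A$, $B$). The paper's direct verification is a one-line linear-algebra identity requiring no convergence hypothesis, so it is the cleaner way to close the argument. If you want to keep the series heuristic, simply replace the final re-summation step by that direct check.
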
 
\begin{proof}
Let $A=[a_{ij}]$ and $B$ be matrices of size $n\times n$ and $m\times m$, respectively.
It suffices to provide an explicit expression for $\gamma$ in Theorem~\ref{th:kappap}. 

Let $\mathbf{x} = (x_i)\in\mathbb R^n$ and $\mathbf{y}\in\mathbb R^m$ be the stationary distribution vector for $A$ and $B$, respectively. Then $\mathbf{x}^TA=\mathbf{x}^T$, $\mathbf{y}^TB=\mathbf{y}^T$ and $\mathbf{x}^T\one=\mathbf{y}^T\one=1$. Let $\pi^T = \begin{bmatrix}
\pi_1^T & \pi_2^T
\end{bmatrix}$ be conformally partitioned with $P$ and be the stationary distribution vector for $P$. Since $\pi = \mathbf{x}\otimes \mathbf{y}$, we have $\pi_1 = x_1\mathbf{y}$ and $\pi_2 = (x_2,\dots, x_n)\otimes \mathbf{y}$. So, $\|\pi_1\| = x_1$ and $\|\pi_2\| = 1-x_1$.

Set $q=mn-m$. Consider
\[
\begin{bmatrix}
\|\pi_2\|\one_m\\ -\|\pi_1\|\one_q\end{bmatrix}=
-\|\pi_1\|\begin{bmatrix}
\one_m\\ \one_q
\end{bmatrix}+\begin{bmatrix}
\one_m\\\mathbf{0}_q\end{bmatrix}.
\]
Then
\begin{equation}\label{eq:temp}
(I-P+\one\pi^T)^{-1}\begin{bmatrix}
\|\pi_2\|\one_m\\ -\|\pi_1\|\one_q
\end{bmatrix}=-\|\pi_1\|\begin{bmatrix}
\one_m\\ \one_q
\end{bmatrix}+(I-P+\one\pi^T)^{-1}(\mathbf{e}_1\otimes\one_m).
\end{equation}
We claim that $$(I-P+\one\pi^T)^{-1}(\mathbf{e}_1\otimes\one_m)=\mathbf{z}\otimes\one_m$$ where $\mathbf{z}=(I-A+\one_n\mathbf{x}^T)^{-1}\mathbf{e}_1$. Consider
\[
(I-A\otimes B+\one (\mathbf{x}\otimes\mathbf{y})^T)(\mathbf{z}\otimes \one_m)=\mathbf{e}_1\otimes\one_m.
\]
This system has the form
\[
(\mathbf{z}\otimes\one_m)-(A\otimes B)(\mathbf{z}\otimes\one_m)+(\one_n\otimes\one_m)(\mathbf{x}\otimes\mathbf{y})^T(\mathbf{z}\otimes\one_m)=(\mathbf{e}_1\otimes\one_m).
\]
This implies that
\[
(\mathbf{z}-A\mathbf{z}+(\one_n\mathbf{x}^T)\mathbf{z}-\mathbf{e}_1)\otimes\one_m=\mathbf{0}.
\]
Hence, our desired claim is established. 

Now, applying \eqref{eq:temp} together with the claim to the expression of $\gamma$ given in Theorem \ref{th:kappap}, we obtain
\[\begin{split}
\gamma=&\begin{bmatrix}
\widehat \pi_1^T & -\widehat \pi_2^T 
\end{bmatrix}
(I-P+\one \pi^T)^{-1}
\begin{bmatrix}
\| \pi_2\| \one_m \\
-\| \pi_1 \| \one_q
\end{bmatrix}\\
=&\begin{bmatrix}
\hat \pi_1^T & -\hat \pi_2^T
\end{bmatrix}(-\|\pi_1\|\one+\mathbf{z}\otimes\one_m)\\
=&(\begin{bmatrix}
1 & -\frac1{1-\|\pi_1\|}(x_2,\ldots,x_n)
\end{bmatrix}\otimes\mathbf{y}^T)(\mathbf{z}\otimes\one_m)\\
=&\frac1{1-x_1}\begin{bmatrix}
1-x_1 & -x_2 & \dots & -x_n
\end{bmatrix}\mathbf{z}\\
=&\frac 1{1-x_1}(\mathbf{e}_1-\mathbf{x})^T(I-A+\one_n\mathbf{x}^T)^{-1}\mathbf{e}_1\\
=&\frac1{1-x_1}(\mathbf{e}_1^T(I-A+\one_n\mathbf{x}^T)^{-1}\mathbf{e}_1-x_1).
\end{split}\]
Therefore, we obtain the desired result.
\end{proof}

\subsection{Sub-stochastic matrices with constant row sums}

Here is the result of this subsection.

\begin{proposition}\label{prop:2}
Let \begin{equation*}
	P = \begin{bmatrix}
	P_{11} & P_{12} \\
	P_{21} & P_{22}
	\end{bmatrix},
\end{equation*} be an irreducible stochastic matrix, where $P_{11}\one = r_1\one$ and $P_{22}\one = r_2\one $ for some $0\leq r_1,r_2 < 1$. Denote by $P_1$ and $P_2$ the stochastic complements of $P_{11}$ and $P_{22}$, respectively. Then
\begin{align*}
	&\|\pi_1\| = \frac{1-r_2}{2-r_1-r_2},\quad \|\pi_2\| = \frac{1-r_1}{2-r_1-r_2},\\
	&\kappa(P) = \kappa(P_1)+\kappa(P_2)+\frac{1}{2-r_1-r_2}.
	\end{align*}
\end{proposition}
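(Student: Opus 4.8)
The plan is to compute $\|\pi_1\|$ and $\|\pi_2\|$ first, and then evaluate the correction term $\gamma$ from Theorem~\ref{th:kappap} using the special structure $P_{11}\one=r_1\one$, $P_{22}\one=r_2\one$. For the stationary masses, I would use the aggregated matrix $S$ from \eqref{eq:agg2}. Because $P_{11}\one_m=r_1\one_m$ we get $\hat\pi_1^TP_{11}\one_m=r_1$ and $\hat\pi_1^TP_{12}\one_{n-m}=1-r_1$; similarly $\hat\pi_2^TP_{22}\one_{n-m}=r_2$ and $\hat\pi_2^TP_{21}\one_m=1-r_2$. Hence $S=\begin{bmatrix} r_1 & 1-r_1\\ 1-r_2 & r_2\end{bmatrix}$, and solving $\alpha^TS=\alpha^T$, $\alpha^T\one=1$ gives $\|\pi_1\|=\oldalpha_1=\frac{1-r_2}{2-r_1-r_2}$ and $\|\pi_2\|=\oldalpha_2=\frac{1-r_1}{2-r_1-r_2}$, as claimed.

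For $\gamma$, I would work from the expression \eqref{formula1}, namely $\gamma=\|\pi_1\|\theta-\|\pi_2\|$, together with one of the equivalent formulas for $\theta$ — formula \eqref{formula for theta4} looks most convenient:
\[
\theta=\hat\pi_2^T\bigl(I+(I-P_{22})^{-1}P_{21}(I-P_1+\one\hat\pi_1^T)^{-1}P_{12}\bigr)(I-P_{22})^{-1}\one.
\]
The key simplification is that $(I-P_{22})\one=(1-r_2)\one$, so $(I-P_{22})^{-1}\one=\frac{1}{1-r_2}\one$, and since $P_{21}\one_m=(1-r_2)\one_{n-m}$ one also gets $(I-P_{22})^{-1}P_{21}\one_m=\one_{n-m}$. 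Similarly $P_{12}(I-P_{22})^{-1}\one=\frac{1}{1-r_2}P_{12}\one_{n-m}=\frac{1-r_1}{1-r_2}\one_m$, and $(I-P_1+\one\hat\pi_1^T)^{-1}\one=\one$ because $(I-P_1+\one\hat\pi_1^T)\one=\one$. Chaining these, the second term inside $\theta$ collapses to $\hat\pi_2^T\cdot(I-P_{22})^{-1}P_{21}\cdot\one\cdot\frac{1-r_1}{1-r_2}=\frac{1-r_1}{1-r_2}$, while the first term gives $\hat\pi_2^T(I-P_{22})^{-1}\one=\frac{1}{1-r_2}$. Thus $\theta=\frac{1}{1-r_2}+\frac{1-r_1}{1-r_2}=\frac{2-r_1}{1-r_2}$, and then
\[
\gamma=\|\pi_1\|\theta-\|\pi_2\|=\frac{1-r_2}{2-r_1-r_2}\cdot\frac{2-r_1}{1-r_2}-\frac{1-r_1}{2-r_1-r_2}=\frac{(2-r_1)-(1-r_1)}{2-r_1-r_2}=\frac{1}{2-r_1-r_2},
\]
which combined with \eqref{formula0} gives the stated formula for $\kappa(P)$.

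I expect the only real bookkeeping hazard to be keeping the row-sum identities for the off-diagonal blocks straight — since $P$ is stochastic, $P_{12}\one_{n-m}=(1-r_1)\one_m$ and $P_{21}\one_m=(1-r_2)\one_{n-m}$, and mixing up which constant attaches to which block is the easiest slip. There is no genuine obstacle: every matrix inverse that appears is applied only to the all-ones vector (or to a vector that the row-sum structure turns into a multiple of $\one$), so no delicate spectral or Schur-complement argument is needed beyond what is already packaged in Theorem~\ref{th:kappap} and formula \eqref{formula for theta4}.
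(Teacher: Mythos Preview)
Your proof is correct and follows essentially the same route as the paper: compute the aggregated matrix $S$ to obtain $\|\pi_1\|,\|\pi_2\|$, then evaluate $\theta$ via the expanded formula \eqref{formula for theta4} using the row-sum identities $(I-P_{22})^{-1}\one=\frac{1}{1-r_2}\one$ and $(I-P_1+\one\hat\pi_1^T)^{-1}\one=\one$, arriving at $\theta=\frac{2-r_1}{1-r_2}$ and hence $\gamma=\frac{1}{2-r_1-r_2}$. The only cosmetic difference is that the paper rewrites $\gamma=\|\pi_1\|\theta-\|\pi_2\|$ as $\|\pi_1\|(1+\theta)-1$ before substituting, but the substance and order of the computations are the same.
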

\begin{proof}
	Note that $P_{12}\one = (1-r_1)\one$ and $P_{21}\one = (1-r_2)\one$. From \eqref{formula for theta2},
	\begin{align*}
	\theta & = \hat{\pi}_2^T(I-P_{22})^{-1}\one + \hat{\pi}_2^T(I-P_{22})^{-1}P_{21}(I-P_1+\one\hat{\pi}_1^T)^{-1}P_{12}(I-P_{22})^{-1}\one\\
	& = \frac{1}{1-r_2} + \frac{1-r_1}{1-r_2}=\frac{2-r_1}{1-r_2}.
	\end{align*}
	On the other hand, we find from \eqref{eq:agg2} that the aggregated matrix $S$ is given by
	\[
	S=\begin{bmatrix}
	r_1&1-r_1\\
	1-r_2&r_2
	\end{bmatrix},
	\]
	and so  $\oldalpha_1=\|\pi_1\|= (1-r_2)/(2-r_1-r_2)$, $\oldalpha_2=\|\pi_2\|=(1-r_1)/(2-r_1-r_2)$, and $\|\pi_1\|(1+\theta)=(3-r_1-r_2)/(2-r_1-r_2)$.
	Whence, in view of \eqref{formula0} and \eqref{formula1}, we deduce that
	\[
	\kappa(P)=\kappa(P_1)+\kappa(P_2)+\frac1{2-r_1-r_2}.\qedhere
	\]
\end{proof}

\section{Some bounds}\label{sec:bounds}
Assume we are given the values of $\kappa(P_1)$ and $\kappa(P_2)$. According to Equation \eqref{formula0}, we can provide lower and upper bounds to the value of $\kappa(P)$ once we are given bounds to the constant $\gamma$.
 In order to do this, we need to determine upper and lower bounds to the value of $\|\pi_1\|$. 

From the equation $\pi^TP=\pi^T$ we find that
\[
\begin{split}
    \pi_1^T=\pi_2^T P_{21}(I-P_{11})^{-1},\quad\quad\pi_2^T =\pi_1^T P_{12}(I-P_{22})^{-1}.
\end{split}
\]
Taking the infinity norms of both sides and using the identity $\|\mathbf{x}^T\|_\infty=\|\mathbf{x}\|_1$, we obtain
\[
\|\pi_1\|\le\|\pi_2\|\, \|P_{21}\|_\infty\|(I-P_{11})^{-1}\|_\infty\le
\|\pi_2\|\frac{\|P_{21}\|_\infty}{1-\|P_{11}\|_\infty},
\]
where the latter inequality is valid if $\|P_{11}\|_\infty<1$.
A similar inequality can be obtained for $\|\pi_2\|$. Combining both inequalities, under the assumption $\|P_{11}\|_\infty,\|P_{22}\|_\infty<1$, we get
\[
\frac{1-\|P_{22}\|_\infty}{\|P_{12}\|_\infty}\le\frac{\|\pi_1\|}{\|\pi_2\|}
\le \frac{\|P_{21}\|_\infty}{1-\|P_{11}\|_\infty}.
\]
Moreover, since $\|\pi_2\|=1-\|\pi_1\|$, we obtain
\begin{equation}\label{eq:pi1bound}
\frac{1-\|P_{22}\|_\infty}{1-\|P_{22}\|_\infty+\|P_{12}\|_\infty}\le\|\pi_1\|\le 
\frac{\|P_{21}\|_\infty}{1-\|P_{11}\|_\infty+\|P_{21}\|_\infty}.
\end{equation}
Since the matrix $P$ is known, the above bounds can be actually computed  at a low computational cost.

Now a bound to the constant $\gamma$ can be obtained by relying on Equation \eqref{formula1} coupled with \eqref{formula for theta2} (one can use \eqref{formula for theta3} or \eqref{formula for theta4}). From \eqref{formula1}, $\gamma = (1+\theta)\|\pi_1\| -1$. We see from \eqref{eq:pi1bound} that, {\color{blue} if $1+\theta\ge 0$, then}
\[
(\theta+1) \frac{1-\|P_{22}\|_\infty}{1-\|P_{22}\|_\infty+\|P_{12}\|_\infty}-1
\le
\gamma
\le
(\theta+1)\frac{\|P_{21}\|_\infty}{1-\|P_{11}\|_\infty+\|P_{21}\|_\infty}-1.
\]
{\color{blue} A similar inequality holds if $1+\theta<0$.}

Note that $\|(I-P_{22})^{-1}P_{21}\|_\infty=1$. Concerning $\theta$, it follows from \eqref{formula for theta2} that
\begin{equation}\label{eq:thetabound}
     \theta\le\; \|(I-P_{22})^{-1}\|_\infty\left(1+
\|P_{12}\|_\infty
\|(I-P_1+\one\widehat\pi_1^T)^{-1}\|_\infty\right).   
\end{equation}
The upper bound to $\gamma$ is expressed in terms of $(I-P_{22})^{-1}$, $(I-P_1+\one\widehat\pi_1^T)^{-1}$ and the norms of the blocks $P_{ij}$.

For another bound on $\gamma$, consider the expression of $\gamma$ in Theorem~\ref{th:kappap}, \textit{i.e.},
\[
\gamma=\begin{bmatrix}
\widehat\pi_1^T&-\widehat\pi_2^T
\end{bmatrix}(I-P+\one\pi^T)^{-1}\begin{bmatrix}
||\pi_2\|\one\\ -\|\pi_1\|\one
\end{bmatrix}.
\]
Taking the infinity norm of both sides in the above equation yields
\[
|\gamma|\le 2\|(I-P+\one\pi^T)^{-1}\|_\infty\max(\|\pi_1\|,1-\|\pi_1\|).
\]

\subsection{A perturbation result}

Let $P$ be an $n\times n$ stochastic and irreducible matrix, and $E$ be $n\times n$ matrix such that $E\one=0$. Let $\epsilon>0$. Then $(P+\epsilon E)\one =\one$ . Assume that $P(\epsilon):=P+\epsilon E$ is stochastic and $\|E\|_\infty\le 1$.

Here, our goal is to relate $\kappa(P(\epsilon))$ and $\kappa(P)$. We have
\[\begin{split}
\kappa(P(\epsilon))=\hbox{Tr}( (I-P(\epsilon)+\one \mathbf{h}^T)^{-1}),\quad\quad\kappa(P)=\hbox{Tr}( (I-P+ \one \mathbf{h}^T)^{-1}),
\end{split}
\]
where $\mathbf{h}$ is any vector such that $\mathbf{h}^T\one=1$.
By subtracting both sides of the above equations we get
\[
\kappa(P(\epsilon))-\kappa(P)=
\hbox{Tr}((I-P(\epsilon)+ \one \mathbf{h}^T)^{-1}(P(\epsilon)-P)(I-P+\one \mathbf{h}^T)^{-1}).
\]
Neglecting $O(\epsilon^2)$ terms, we obtain
\[\begin{split}
\kappa(P(\epsilon))-\kappa(P)&=
\epsilon\, \hbox{Tr}((I-P+\one \mathbf{h}^T)^{-1}E(I-P+\one \mathbf{h}^T)^{-1})\\
&=\epsilon\,\hbox{Tr}((I-P+ \one \mathbf{h}^T)^{-2}E)+O(\epsilon^2).
\end{split}
\]
This estimate can be also deduced as a specific case of \cite[Lemma 3.2]{kirk}.

From the Cauchy--Schwarz inequality, we have $\hbox{Tr}(AB)\le \|A\|_F\|B\|_F$, where $\|\cdot\|_F$ is the Frobenius norm. It follows that
\begin{equation}\label{eq:perturb}
|\kappa(P(\epsilon))-\kappa(P)|\le
\epsilon \|(I-P+ \one \mathbf{h}^T)^{-2}\|_F\|E\|_F+O(\epsilon^2).
\end{equation}

Combining the above bound with Example~\ref{cor:kemeny_bipartite} yields the following result that concerns Kemeny's constant of a matrix associated with an almost bipartite graph.

\begin{corollary}
Let $P$ be the stochastic matrix defined in Example \ref{cor:kemeny_bipartite} and $E$ be a matrix such that $E\one=0$ and $\|E\|_\infty\le 1$. Set $P(\epsilon)=P+\epsilon E$ and assume that $P(\epsilon)$ is stochastic in a neighborhood of 0. Then, up to within $O(\epsilon^2)$ terms we have
\[
\left|\kappa(P(\epsilon))-\kappa(P_{1})-\kappa(P_{2})-\frac12\right|\le \epsilon \|(I-P+ \one \mathbf{h}^T)^{-2}\|_F\|E\|_F+O(\epsilon^2),
\]
where $\mathbf{h}$ is any vector such that $\mathbf{h}^T\one =1$
\end{corollary}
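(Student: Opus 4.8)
The plan is to combine the exact formula for $\kappa(P)$ furnished by the block-cyclic structure of $P$ with the first-order perturbation estimate \eqref{eq:perturb}; no genuinely new argument is needed.

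First I would pin down $\kappa(P)$ exactly. The matrix $P$ of Example~\ref{cor:kemeny_bipartite} has both diagonal blocks equal to zero, hence it is the block-cyclic matrix \eqref{eq:cyc} with cyclicity index $d=2$ and $A_2=P_{12}$, $A_1=P_{21}$. Proposition~\ref{prop:3} therefore applies and gives
\[
\kappa(P)=\kappa(P_1)+\kappa(P_2)+\frac12,
\]
where, by \eqref{eq:P1}, $P_1=P_{12}P_{21}$ and $P_2=P_{21}P_{12}$ are the stochastic complements of $P_{11}=0$ and $P_{22}=0$; this is precisely the identity behind Example~\ref{cor:kemeny_bipartite}, before \eqref{eq:kemdiff} is used to recast it in the form \eqref{eq:app1}.

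Next I would invoke the perturbation bound \eqref{eq:perturb}. Since $P$ is stochastic and irreducible and, by hypothesis, $P(\epsilon)=P+\epsilon E$ is stochastic for $\epsilon$ in a neighborhood of $0$ with $E\one=0$ and $\|E\|_\infty\le1$, estimate \eqref{eq:perturb} gives, for any $\mathbf{h}$ with $\mathbf{h}^T\one=1$,
\[
|\kappa(P(\epsilon))-\kappa(P)|\le\epsilon\,\|(I-P+\one\mathbf{h}^T)^{-2}\|_F\,\|E\|_F+O(\epsilon^2).
\]
Substituting the identity $\kappa(P)=\kappa(P_1)+\kappa(P_2)+\frac12$ into the left-hand side yields exactly the claimed inequality. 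The only minor point to note is that $\kappa(P(\epsilon))$ is well defined for $\epsilon$ small: the support of $P(\epsilon)$ contains that of the irreducible matrix $P$ once $\epsilon$ is small enough, so $P(\epsilon)$ remains irreducible. There is no real obstacle here — the statement is simply the conjunction of (the computation behind) Example~\ref{cor:kemeny_bipartite} and the perturbation estimate \eqref{eq:perturb}.
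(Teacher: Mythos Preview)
Your proposal is correct and matches the paper's approach exactly: the paper presents the corollary as an immediate consequence of combining the perturbation bound \eqref{eq:perturb} with the identity $\kappa(P)=\kappa(P_1)+\kappa(P_2)+\tfrac12$ from Example~\ref{cor:kemeny_bipartite} (equivalently, Proposition~\ref{prop:3} with $d=2$), and that is precisely what you do.
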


Similar bounds can be obtained for the ``stochastic'' perturbation of a cyclic matrix having arbitrary cyclicity index and for the perturbation of the Kronecker product of two stochastic matrices.

Analogously, we may provide a ``perturbed version'' of Proposition \ref{prop:2} obtained by combining Proposition \ref{prop:2} itself with Equation \eqref{eq:perturb}.

\section{A divide-and-conquer algorithm}\label{sec:algo}

We can use the analysis done in Section~\ref{sec:stochastic-complements} to construct a divide-and-conquer algorithm for computing Kemeny's constant of a stochastic sparse matrix $P$. In fact, it is sufficient to identify a partitioning of the form~\eqref{eq:P} to start a recursion procedure. This procedure employs Theorem~\ref{th:kappap} to express $\kappa(P)$ in terms of $\kappa(P_1)$ and $\kappa(P_2)$ of the censored chains and the $\gamma$, where $P_1$ and $P_2$ are the stochastic complements. Algorithm~\ref{alg:kemenyandconquer} compactly describes the entire procedure in a recursive formulation.

\begin{algorithm}[htbp]
\SetKwProg{Fn}{}{}{}\SetKwFunction{FRecurs}{void FnRecursive}%
\SetKwFunction{FRecurs}{Kemeny}%
\Fn(\tcc*[h]{Implementation as a recursive function}){\FRecurs{$P$,$\pi$}}{
\KwIn{Stochastic matrix $P$, stationary distribution vector $\pi$, an integer $n_0>0$.}
\KwOut{Kemeny's constant $\kappa(P)$ of $P$.}
\tcc{In the following the $\gets$ means variable assignment while the $=$ are used to define shorthand.}
$n \gets \operatorname{size}(P)$\;
\eIf{$n < n_0$}{%
    $\kappa(P) \gets \Tr\left( (I - P + \nicefrac{\mathbf{1}\mathbf{1}^T}{n} )^{-1}\right) - 1$\;
    \Return
}{%
    $m \gets \lfloor \nicefrac{n}{2} \rfloor$\;
    $P_{11} = P(1:m,1:m)$\tcc*[r]{Block structure}
    $P_{12} = P(1:m,m+1:n)$\;
    $P_{21} = P(m+1:n,1:m)$\;
    $P_{22} = P(m+1:n,m+1:n)$\;
    $\pi_1=\pi(1:m)$\;
    $\pi_2=\pi(m+1:n)$\;
    $\hat \pi_i \gets \frac 1{\|\pi_i \|}\pi_i,~~i=1,2$\;
    \tcc{Remark: we can compute the $LU$ factorization of $I-P_{22}$ once and reuse it.}
    $ P_1 \gets P_{11}+P_{12}(I-P_{22})^{-1}P_{21}$\label{alg:system1}\tcc*[r]{stochastic complements}
    $ P_2 \gets P_{22}+P_{21}(I-P_{11})^{-1}P_{12}$\label{alg:system2}\;
    $\mathbf{x} \gets (I - P_{22})^{-1}\one$\label{alg:system3}\tcc*{Computation of $\theta$ using~\eqref{formula for theta2} }
    $\mathbf{y} \gets (I-P_1+\one\hat{\pi}_1^T)^{-1} (P_{12} \mathbf{x})$\label{alg:system4}\;
    $\mathbf{y} \gets (I-P_{22})^{-1}(P_{21}\mathbf{y})$\label{alg:system5}\;
    $\theta \gets \hat{\pi}_2^T(\mathbf{x} + \mathbf{y})$\;
    $\gamma \gets \|\pi_1\|\theta-\|\pi_2\|$\tcc*{Compute correction as in~\eqref{formula1}}
    $\kappa(P_1) \gets \;$\FRecurs{$P_1$,$\hat \pi_1$}\tcc*{Recursion}
    $\kappa(P_2) \gets \;$\FRecurs{$P_2$,$\hat \pi_2$}\tcc*{Recursion}
    $\kappa(P) \gets \kappa(P_1)+ \kappa(P_2) + \gamma$\;
    \Return
}%
}
\caption{Divide-and-conquer algorithm for the computation of Kemeny's constant.}\label{alg:kemenyandconquer}
\end{algorithm}
To effectively apply this strategy, it is crucial to efficiently address several computational subproblems. Let us set aside the computation of the stationary distribution vector $\pi$ for the starting chain $P$, which is the essential component required to initiate the entire procedure (to this regard, for large-scale problems, the algorithms proposed in \cite{aggregationdis} might be used). The most significant part of the computation lies in the solution of the linear systems to lines \ref{alg:system1}--\ref{alg:system5} of Algorithm~\ref{alg:kemenyandconquer}. 

Let us focus on the solution of the following systems:
\[
(I - P_{ii}) \mathbf{x} = \mathbf{b}, \qquad i = 1,2,
\]
where we further assume that the block matrices $\{P_{ii}\}_{i=1}^{2}$ can benefit from sparse storage. By construction, the blocks $P_{ii}$, $i=1,2$ are sub-stochastic, i.e., $(P_{ii})_{p,q} \geq 0$ for all $p,q$, and $P_{ii}\one \le \one$, $P_{ii}\one \neq \one$. Since $P$ is irreducible, the matrices  $I - P_{ii}$, $i=1,2$, are non-singular M-matrices. This property allows us to resort to different efficient iterative strategies for the solution of the systems involved. Since in general matrices will be non-symmetric, one can consider using GMRES~\cite[Section~6.5]{SaadBook} or BiCGstab~\cite[Section~7.4.2]{SaadBook} for solving the different linear systems. (In any case it will be necessary to have a preconditioner available to accelerate the convergence of the Krylov method in question.) Since we are working with M-matrices, a natural choice is to use incomplete factorizations. Specifically, we can use Incomplete LU factorizations (ILU), that is, we can approximate the matrices as
\[
    I - P_{ii} = \widetilde{L}_{i}\widetilde{U}_{i} + R_i \approx \widetilde{L}_{i}\widetilde{U}_{i} \qquad i=1,2, 
\]
with the residual matrix $R_i$ satisfying certain constraints, such as having zero entries in some prescribed locations--either static, determined on the base of the natural occurring fill-in during the computation, or via thresholding on their entries. For M-matrices, the existence of such objects is guaranteed by the fact that Gaussian elimination and non-diagonal dropping of the entries preserves the property of being a non singular M-matrix--see~\cite{MR0117242} for the original proof or~\cite[Theorem~10.1]{SaadBook} for a modern explanation. Similarly, to precondition the system on the line~\ref{alg:system4} of the algorithm that contains the matrix $P_1$, we can consider an incomplete factorization of an approximation of the matrix $I-P_1$, namely,
\[
\begin{split}
    I-P_1 = &\; I -P_{11} -P_{12}(I-P_{22})^{-1}P_{21} \\ \approx&\;  I -P_{11} -P_{12}(\operatorname{diag}(I-P_{22}))^{-1}P_{21} \approx \widetilde{L}_3\widetilde{U}_3,
\end{split}
\]
where $\operatorname{diag}(I-P_{22})$ is the diagonal matrix formed by the diagonal entries of $I-P_{22}$.
It is known that the computation of LU factorization, incomplete or not, benefits from the reordering of the entries of the matrix itself, see, e.g.,~\cite{MR1694677}. %
Furthermore, as seen in Proposition~\ref{prop:2}, the closer a matrix is to block diagonal after appropriate permutation, the easier the global calculation will be. Since the off-diagonal matrices in the block decomposition~\eqref{eq:P} will be made up of a few nonzero elements, the natural choice for the permutation algorithm is to use Nested Dissection permutation algorithm~\cite{MR1639073}; an example of the application of this algorithm is given in Figure~\ref{fig:nested_dissection}, from which we observe that the resulting matrix has the desired ``quasi block-diagonal'' structure.
\begin{figure}[htbp]
    \centering
    \includegraphics[width=0.8\columnwidth]{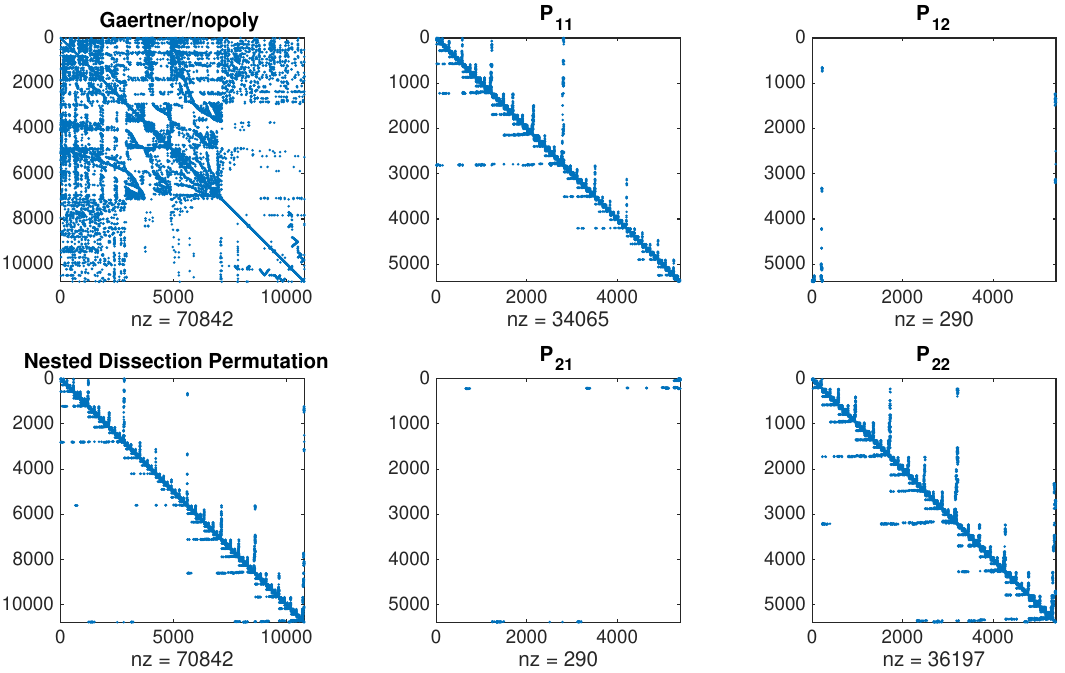}
    \caption{Application of the Nested Dissection algorithm from~\cite{MR1639073} to the \texttt{Gaertner/nopoly} matrix from the \texttt{SuiteSparse Matrix Collection} (formerly the University of Florida Sparse Matrix Collection)~\cite{MR2865011}.}
    \label{fig:nested_dissection}
\end{figure}
We want to underline that this initial permutation step would also be advisable if one wants to obtain Kemeny's constant directly from the expression~\eqref{eq:ks}, since the related effect of this choice is also that of a permutation which is fill-reducing for the computation of $LU$ factorization of sparse matrices, a step needed for the efficient computation of the matrix inverse in~\eqref{eq:ks}.

\subsection{Low-precision randomized approximation}\label{sec:low-precision}
We note that since we want to solve intermediate linear systems using an iterative method, what we are actually calculating is an approximation of Kemeny's constant. For this reason, it makes sense to consider randomized algorithms for the direct approximation of~\eqref{eq:ks} through the trace of a matrix.  Such  approaches  have been also used for instance in \cite{liMC}, for reversible Markov chains, and in \cite{Xukem}, for Markov chains modeling a random walk on an undirected graphs. We consider here the application of the \hutchpp\ algorithm~\cite{MR4537958}, which is a straightforward improvement on the Hutchinson estimator~\cite{MR1075456} used in~\cite{Xukem}.  In our case this means having an \emph{oracle} that computes
\begin{equation}\label{eq:oracle}
     \mathbf{y} = (I-P+\one \mathbf{h}^T)^{-1} \mathbf{x} \equiv A \mathbf{x}, \quad \mathbf{h} = \mathbf{1}/n.
\end{equation}
Then the following results gives us the number of oracle calls, i.e., linear system solutions, we have  to approximate $\Tr\left((I-P+\one \mathbf{h}^T)^{-1} \right)$ within a given tolerance.
\begin{theorem}[{\cite[Theorem~1]{MR4537958}}]\label{thm:hutchpp}
If \hutchpp\ is implemented with $\ell = O( \nicefrac{\sqrt{\log(\nicefrac{1}{\delta})}}{\epsilon} + \log(\nicefrac{1}{\delta}))$ matrix-vector multiplication queries, then for any positive semidefinite matrix $A$, with probability $\geq 1 - \delta$, the output of $\hutchpp(A)$ satisfies
\[
(1-\varepsilon) \Tr(A) \leq \hutchpp(A) \leq (1+\varepsilon) \Tr(A).
\]
\end{theorem}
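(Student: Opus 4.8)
The plan is to reproduce the argument of \cite[Theorem~1]{MR4537958}, whose engine is the fact that $\hutchpp$ replaces $\Tr(A)$ by the sum of a deterministically computed low-rank contribution and a Hutchinson estimate of a provably \emph{small} residual. Write the query budget as $\ell \approx 3k$, draw sub-Gaussian sketch matrices $S,G\in\mathbb{R}^{n\times k}$, set $Q=\mathrm{orth}(AS)$ and $P=QQ^T$, so that $\hutchpp(A)=\Tr(Q^TAQ)+\tfrac1k\Tr\!\big(G^T(I-P)A(I-P)G\big)$. First I would record the exact splitting $\Tr(A)=\Tr(PAP)+\Tr\!\big((I-P)A(I-P)\big)$: writing $A=A^{1/2}A^{1/2}$ and using the cyclicity of the trace together with $P^2=P$, both cross terms $\Tr(PA(I-P))$ and $\Tr((I-P)AP)$ vanish, and $\Tr(PAP)=\Tr(Q^TAQ)$ is exactly the quantity the algorithm evaluates with $k$ matrix--vector products. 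Hence the entire error of $\hutchpp(A)$ equals the error of Hutchinson's estimator applied to the PSD matrix $B:=(I-P)A(I-P)$, and it remains to bound that.

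Two ingredients are needed. The first is the concentration of Hutchinson's estimator: for a \emph{fixed} PSD matrix $B$ and sub-Gaussian vectors $g_1,\dots,g_k$, a Hanson--Wright bound gives $\big|\tfrac1k\Tr(G^TBG)-\Tr(B)\big|\le c\sqrt{\log(1/\delta)/k}\,\|B\|_F$ with probability at least $1-\delta/2$, provided $k\gtrsim\log(1/\delta)$ (outside that regime a spectral-norm term enters, which is why a $+\log(1/\delta)$ appears in the budget). The second is that the residual is small: the randomized low-rank approximation theory yields $\mathbb{E}\|(I-P)A\|_F^2 \le \tfrac{c}{k}\,\|A-A_{\lfloor k/2\rfloor}\|_F^2$ for an absolute constant $c$, and the spectral estimate $\|A-A_j\|_F^2=\sum_{i>j}\sigma_i(A)^2\le\sigma_{j+1}(A)\Tr(A)\le\tfrac1{j+1}\Tr(A)^2$ then gives $\mathbb{E}\|(I-P)A\|_F^2\le\tfrac{c'}{k}\Tr(A)^2$; combining this with a sharp concentration of $\|(I-P)A\|_F$ about its (small) mean, valid once the oversampling is $\Omega(\log(1/\delta))$, produces $\|B\|_F\le\|(I-P)A\|_F\le\tfrac{c''}{\sqrt{k}}\Tr(A)$ with probability at least $1-\delta/2$.

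I would then union-bound the two bad events (the first depends only on $S$, the second on $G$ given $S$), so that on the intersection event, of probability at least $1-\delta$, the total error is at most $c\sqrt{\log(1/\delta)/k}\cdot\tfrac{c''}{\sqrt{k}}\Tr(A)=\tfrac{cc''\sqrt{\log(1/\delta)}}{k}\Tr(A)$. Choosing $k=\Theta\!\big(\tfrac{\sqrt{\log(1/\delta)}}{\varepsilon}+\log(1/\delta)\big)$ — the first term to force $\tfrac{\sqrt{\log(1/\delta)}}{k}\le\varepsilon$ and the second to remain in the Hanson--Wright deviation regime — makes this error at most $\varepsilon\,\Tr(A)$; since $\Tr(A)\ge 0$ for PSD $A$, this is exactly the two-sided bound $(1-\varepsilon)\Tr(A)\le\hutchpp(A)\le(1+\varepsilon)\Tr(A)$, and the total query count is $\ell=3k=O\!\big(\tfrac{\sqrt{\log(1/\delta)}}{\varepsilon}+\log(1/\delta)\big)$.

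The main obstacle is the second ingredient. Controlling $\mathbb{E}\|(I-P)A\|_F$ through the standard randomized-SVD decomposition requires an inverse-moment estimate $\mathbb{E}\|(U_1^TS)^\dagger\|_F^2$ for the Gaussian block $U_1^TS$, whose apparent loss must be absorbed by a constant-factor oversampling concealed inside the $\Theta(k)$; and, more delicately, promoting that \emph{expectation} bound to a \emph{high-probability} bound with only $\log(1/\delta)$ additional oversampling — rather than the $1/\delta$ factor a naive Markov step would introduce — is precisely what pins down the optimal $\sqrt{\log(1/\delta)}/\varepsilon$ rate and constitutes the real content of the theorem. Everything else (the exact splitting, the Hanson--Wright tail, the spectral estimate for $\|A-A_j\|_F$) is routine.
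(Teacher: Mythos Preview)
The paper does not prove this theorem: it is quoted verbatim as \cite[Theorem~1]{MR4537958} and used as a black box for the numerical experiments, so there is no in-paper proof to compare your proposal against. Your sketch is a faithful outline of the argument in the cited source (Meyer--Musco--Musco--Woodruff): the exact splitting $\Tr(A)=\Tr(Q^TAQ)+\Tr((I-P)A(I-P))$, the Hanson--Wright tail for Hutchinson on the residual, and the randomized low-rank bound $\|(I-P)A\|_F\lesssim k^{-1/2}\Tr(A)$ are precisely the three pieces they assemble, and you correctly identify the passage from an expectation bound to a high-probability bound on $\|(I-P)A\|_F$ as the place where the $\log(1/\delta)$ oversampling is spent.

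One minor remark: in the original argument the high-probability control of $\|(I-P)A\|_F$ is obtained not by a separate concentration-around-the-mean step but by a direct Markov/union argument on the sketch, using that $\|(I-QQ^T)A\|_F\le 2\|A-A_k\|_F$ holds deterministically once the sketch succeeds; your description of this as ``concentration of $\|(I-P)A\|_F$ about its mean'' is slightly off in mechanism, though the conclusion and the resulting query count are the same.
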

Similarly to what was done for the solution of linear systems in the divide-and-conquer algorithm, we can use the PCG with an Incomplete Cholesky preconditioner calculated on $I-P$, i.e.,
\[
 A \approx \widetilde{L}^{-T} \left( I - \frac{1}{1 + (\mathbf{h}^T\widetilde{L}^{-T})(\widetilde{L}^{-1}\one) }(\widetilde{L}^{-1}\one)(\mathbf{h}^T\widetilde{L}^{-T}) \right) \widetilde{L}^{-1},
\]
as an oracle for the calculation of the products $A\mathbf{x}$ necessary for \hutchpp.
We point out that the 
\hutchpp\ algorithm works under the assumption that the matrix of which we approximate the trace is symmetric positive semidefinite, and this assumption is verified for Markov chains modeling a random walk on an undirected graph.

\subsection{Numerical examples}

This section contains some numerical examples in which the performance of the algorithms obtained starting from the theoretical analysis is analyzed. All experiments are reproducible starting from the code contained in the repository \href{https://github.com/Cirdans-Home/Kemeny-and-Conquer}{github.com/Cirdans-Home/Kemeny-and-Conquer}. All experiments are performed on a vertex of the Toeplitz cluster at the University of Pisa equipped with an Intel\textsuperscript{\textregistered} Xeon\textsuperscript{\textregistered} CPU E5-2650 v4 at 2.20GHz and 250 Gb of RAM, using MATLAB 9.10.0.1602886 (R2021a). 
The Markov chains used in the examples are built employing matrices from the \texttt{SuiteSparse Matrix Collection} (formerly the University of Florida Sparse Matrix Collection)~\cite{MR2865011}. Specifically, we build the probability transition matrix $P$ from irreducible adjacency matrices $A$ as
\[
    P = \operatorname{diag}(\hat{A} \one)^{-1} \hat{A},
\]
for $\hat{A}$ the matrix obtained from $A$ with all weights set to $1$. For the low-precision randomized case we use instead
\[
P = \operatorname{diag}( \hat{A} \one)^{-\nicefrac{1}{2} } \hat{A}  \operatorname{diag}( \hat{A} \one)^{-\nicefrac{1}{2} }, 
\]
for cases with $A = A^T$, i.e., the graph is undirected. All the relative errors in the following experiments are computed with respect to Kemeny's constant obtained directly, i.e., applying~\eqref{eq:ks} by computing the whole matrix inverse.

\subsubsection{Low-precision randomized approximation}

We first focus on using randomized estimators from Section~\ref{sec:low-precision} for the trace of a matrix. Table~\ref{tab:randomized} contains the estimates obtained using \hutchpp\ with the parameters $(\delta,\epsilon)$ of Theorem~\ref{thm:hutchpp} chosen as $\delta = \nicefrac{1}{4}$, and $\epsilon = 10^{-1}$, i.e., we use a set of $l = 13$ random sample vectors.
\begin{table}[htbp]
    \centering
    \begin{tabular}{lcccc}
    \toprule
    & & \multicolumn{2}{c}{Time (s)} \\
    \cmidrule{3-4}
    Matrix & $n$ & Direct & \hutchpp & Rel. Error \\
    \midrule
\texttt{Pajek/USpowerGrid} & 4941 & 2.11 & 0.43 & 2.69e-04\\
\texttt{Gaertner/nopoly} & 10774 & 21.97 & 2.64 & 7.58e-03\\
\texttt{Gleich/minnesota} & 2640 & 0.33 & 0.15 & 1.94e-03\\
    \bottomrule
    \end{tabular}
    \caption{Randomized approximation of Kemeny's constant with \hutchpp algorithm. The relative error and the timings (s) for the \hutchpp is the average over 100 repetitions. The inner iterative solver is PCG preconditioned by the ICHOL(0)-based preconditioner from~\ref{sec:low-precision} and with a tolerance of $10^{-2}$. The number of samples is $l=13$, that corresponds to~$\delta = 1/4$ and~$\epsilon = 10^{-1}$.}
    \label{tab:randomized}
\end{table}
As an internal solver for the oracle calculation we use the PCG preconditioned with an ICHOL(0), i.e., such as to preserve the sparsity pattern of the starting matrix in $\widetilde{L}$. Since the external precision we expect to achieve is of the order of $10^{-2}$, the tolerance for the iterative method is chosen to be $10^{-3}$. In all cases, we always start from the matrix reordered with the Nested Dissection permutation algorithm~\cite{MR1639073}.

\subsubsection{Divide-and-conquer algorithm}
Here we test the application of the divide-and-conquer strategy on the transition matrices built from the matrices in \texttt{SuiteSparse} \texttt{Matrix} \texttt{Collection}~\cite{MR2865011} in the previous section. We use two variants of Algorithm~\ref{alg:kemenyandconquer}: one called Recursive in Table~\ref{tab:num_example} exploits an incomplete ILU(0) factorization and the GMRES to solve systems with left term $I-P_{22}$, and the other called direct-recursive exploits the LU factorization of the matrix, already needed for the calculation of the stochastic complement, also for the solution of the two auxiliary systems with the same matrix for the calculation of $\theta$. %
\begin{table}[hbtp]
    \addtolength{\tabcolsep}{-3pt} 
    \centering
    \scriptsize
    \begin{tabular}{lccccccc}
     \toprule
     & & & \multicolumn{3}{c}{Time (s)} & \multicolumn{2}{c}{Rel. Error }\\
     \cmidrule{4-6}\cmidrule{6-8}
     Matrix & $n$ & $\kappa(P)$ & Direct & Recursive & Dir-Rec & Recursive & Dir-Rec \\
     \midrule
        \texttt{Gaertner/big} & 13209 & 58134.53 & 17 & 5.81 & \textbf{4.15} & 1.57e-08 & 4.09e-09 \\
        \texttt{vanHeukelum/cage10} & 11397 & 15378.12 & \textbf{11.06} & 44.83 & 56.76 & 8.28e-13 & 1.25e-11 \\
        \texttt{vanHeukelum/cage11} & 39082 & 51177.08 & \textbf{315.27} & 1259.11 & 1591.04 & 2.59e-10 & 1.53e-10 \\
        \texttt{HB/gre\_1107} & 1107 & 1483.57 & \textbf{0.04} & 0.10 & 0.11 & 1.97e-09 & 1.09e-09 \\
        \texttt{Gaertner/nopoly} & 10774 &  171656.87 & 9.56 & 9.14 & \textbf{6.71} & 3.25e-07 & 3.09e-07 \\
        \texttt{Gaertner/pesa} & 11738 & 131250.78 & 11.45 & 6.36 & \textbf{3.30} & 2.15e-07 & 2.28e-07 \\
        \texttt{Gleich/usroads-48} & 126146 & 1818057.53 & 8243.57 & 743.37 & \textbf{542.88} & 8.65e-07 & 1.60e-07 \\
        \texttt{Barabasi/NotreDame\_www}\textsuperscript{$\dagger$} & 34643 & 1173610.94 & 172.36 & 94.28 & \textbf{93.20} & 2.85e-04 & 3.58e-05 \\ 
        \texttt{Pajek/USpowerGrid} & 4941 & 30166.55 & 1.32 & 1.48 & \textbf{0.58} & 3.01e-08 & 2.80e-08 \\
        \texttt{Gleich/minnesota}\textsuperscript{$\dagger$} & 2640 & 18243.53 & 0.30 & 0.34 & \textbf{0.22} & 8.37e-08 & 3.53e-08  \\
    \bottomrule
    \end{tabular}
    \addtolength{\tabcolsep}{3pt}  
\caption{Performance of the recursive implementation of the divide-and-conquer algorithm for computing Kemeny's constant on some test matrices. If the matrix name has a $\dagger$ symbol, then the experiment has been run on the largest connected component of the graph, i.e, on the largest irreducible sub-chain.}  \label{tab:num_example}
\end{table}
From the results in Table~\ref{tab:num_example} we observe that in most cases the divide-and-conquer algorithm manages to reduce the computational time compared to the direct computation of Kemeny's constant. In some cases we observe the absence of an improvement, investigating in detail what we observe is that the decomposition into blocks done by halving is far from being optimal. This causes both the creation of denser stochastic complements and higher solution times for auxiliary linear systems. The implementation of nested dissection in MATLAB does not have in output the limitation of the \emph{clusters} obtained, being able to use that should increase the advantage of the recursive version compared to the one in which the recursion is done by simple halving.

\section{Conclusions}\label{sec:conc}
Kemeny's constant $\kappa(P)$ of a stochastic matrix $P$ has been expressed in terms of Kemeny's constants of the stochastic complements obtained from a block partitioning, and the constant $\gamma$. 
Explicit expressions of $\kappa(P)$ have been provided for the transition matrix of a periodic Markov chain, the Kronecker product of stochastic matrices, and sub-stochastic matrices with constant row sums. %
The main result, Theorem~\ref{th:kappap}, has been used to design a divide-and-conquer algorithm for recursively computing $\kappa(P)$. Numerical experiments applied to real-world graphs show the effectiveness of this approach especially in the case of nearly completely decomposable matrices.

As Kemeny's constant measures the expected time of a Markov chain $X$ to travel between two randomly chosen states, a natural question arises: ``What interpretation can be ascribed to Kemeny's constant of a censored Markov chain $X_1$ associated with the original Markov chain $X$?''. Since $X_1$ is induced from $X$, it would be interesting to provide insights into what features of $X$ are captured in Kemeny's constant of $X_1$. Partitioning the state space of $X$ into two subsets yields two censored Markov chains. We can see from Theorem~\ref{th:kappap} that Kemeny's constants of these censored Markov chains are interdependent. Understanding how partitioning the state space influences Kemeny's constants of censored Markov chains would be beneficial for gaining insights into the question.

\section*{Acknowledgments}
The authors sincerely thank the anonymous referee for the thorough review and valuable suggestions, which have significantly enhanced the clarity and quality of the presentation.

\bibliographystyle{abbrv}

\bibliography{KemenyCensored.bib}
\end{document}